\documentclass[11pt,a4paper,reqno]{amsart}
\usepackage{amsmath,amsthm,amsfonts}
\usepackage{paralist}
\usepackage{graphics}
\usepackage{epsfig}
\usepackage{cite}
\usepackage[a4paper]{geometry}

\usepackage{comment}
\usepackage{xcolor}
\usepackage{subfigure}
\usepackage{dsfont}
\usepackage{bbm}
\usepackage{bm}
\usepackage{soul}
\usepackage[shortlabels]{enumitem}

\usepackage{amssymb,amsxtra,setspace,xspace,lmodern,psfrag,color,latexsym}
\usepackage{mathtools,pifont,mathrsfs,caption,microtype,accents}

\usepackage[colorlinks=true]{hyperref}
\hypersetup{urlcolor=blue,citecolor=red,linkcolor=black}

\allowdisplaybreaks[4]

\setcounter{secnumdepth}{3}

\newenvironment{listi}
{\begin{list}
    {(\roman{broj})}
    { \usecounter{broj}}
    \setlength{\labelwidth}{30pt}
    \addtolength{\itemsep}{2pt}}
  {   \end{list} }
\newcounter{broj}

\definecolor{darkgreen}{rgb}{0.1,0.6,0.1}
\definecolor{darkred}{rgb}{0.6,0,0}
\definecolor{lightgray}{rgb}{0.5,0.5,0.5}

\DeclareMathAlphabet{\mathup}{OT1}{\familydefault}{m}{n}
\newcommand{\di}[1]{\mathop{}\!\mathup{d} #1}
\newcommand{\dx}[1]{\mathop{}\!\mathup{d} #1}
\newcommand{\pderiv}[3][]{\frac{\mathop{}\!\mathup{d}^{#1} #2}{\mathop{}\!\mathup{d} #3^{#1}}}
\renewcommand{\:}{\colon}

\newcommand{\N}{\mathbb{N}}

\newcommand{\R}{\mathbb{R}}
\newcommand{\Rd}{{\mathbb{R}^{d}}}

\newcommand{\Rdd}{{\mathbb{R}^{2d}}}

\newcommand{\cM}{\mathcal{M}}

\newcommand{\cP}{\mathcal{P}}
\newcommand{\cV}{\mathcal{V}}

\newcommand{\bj}{\bm{j}}
\newcommand{\dgrad}{\overline\nabla}
\newcommand{\eps}{\varepsilon}

\newcommand{\cMtv}{\mathcal{\cM}_{\mathrm{TV}}}

\newcommand{\ACt}{\mathcal{AC}_T}
\newcommand{\dT}{\boldsymbol{d}_{\ACt}}
\newcommand{\cB}{\mathcal{B}}

\newtheorem{theorem}{Theorem}[section]
\newtheorem{corollary}[theorem]{Corollary}
\newtheorem{lemma}[theorem]{Lemma}
\newtheorem{proposition}[theorem]{Proposition}

\newtheorem{definition}[theorem]{Definition}

\theoremstyle{remark}
\newtheorem{rem}[theorem]{Remark}
\newtheorem{exm}[theorem]{Example}

\newtheoremstyle{namedthmstyle} 
{} 
{} 
{\itshape} 
{} 
{\bfseries} 
{} 
{ } 
{}
\theoremstyle{namedthmstyle}
\newcommand{\thistheoremname}{}
\newtheorem*{genericthm}{\thistheoremname}
\newenvironment{namedthm}[1]
{\renewcommand{\thistheoremname}{#1}%
  \begin{genericthm}}
  {\end{genericthm}}
\makeatletter
\def\namedlabel#1#2{\begingroup
  \def\@currentlabel{#2}%
  \label{#1}\endgroup
}
\makeatother

\newcommand{\Rddiag}{\R^{2d}_{\!\scriptscriptstyle\diagup}}

\makeatletter
\@namedef{subjclassname@2020}{%
  \textup{2020} Mathematics Subject Classification}
\makeatother

\numberwithin{equation}{section}

\DeclareMathOperator{\I}{I}
\DeclareMathOperator{\II}{II}

\DeclareMathOperator{\AC}{AC}
\DeclareMathOperator{\TV}{{TV}}

\DeclareMathOperator{\CE}{CE}

\DeclareMathOperator{\supp}{supp}

\DeclarePairedDelimiter{\abs}{\lvert}{\rvert}
\DeclarePairedDelimiter{\norm}{\lVert}{\rVert}
\DeclarePairedDelimiter{\bra}{(}{)}

\DeclarePairedDelimiter{\set}{\{}{\}}


\hyphenation{as-so-cia-ted}

\title{On a Class of Nonlocal Continuity Equations on Graphs}

\begin{document}
\author{A. Esposito \and F. S. Patacchini \and A. Schlichting} 
\address{A. Esposito -- Mathematical Institute, University of Oxford, Woodstock Road, Oxford, OX2 6GG, United Kingdom.}
\address{F. S. Patacchini -- IFP Energies nouvelles, 1-4 avenue de Bois-Pr\'eau, 92852 Rueil-Malmaison, France}
\address{A. Schlichting -- Institute for Analysis and Numerics, University of Münster, Orléans-Ring 10, 48149 Münster, Germany}
\email{antonio.esposito@maths.ox.ac.uk}
\email{francesco.patacchini@ifpen.fr}
\email{a.schlichting@uni-muenster.de}

\begin{abstract}
Motivated by applications in data science, we study partial differential equations on graphs. By a classical fixed-point argument, we show existence and uniqueness of solutions to a class of nonlocal continuity equations on graphs. We consider general interpolation functions, which give rise to a variety of different dynamics, e.g., the nonlocal interaction dynamics coming from a solution-dependent velocity field. Our analysis reveals structural differences with the more standard Euclidean space, as some analogous properties rely on the interpolation chosen.
\end{abstract}

\keywords{evolution on graphs, flux interpolation, upwind interpolation, fixed point}
\subjclass[2020]{35R02, 35R06, 35A01, 35A02}





\date{}

\maketitle


\section*{Notation}

For reference, we list some of the most recurrent notation of the paper. 

\subsection*{Measures}
Let $A$ denote a generic set.
\begin{itemize}
  \item $\cB(A)$: Borel subsets of $A$.
  \item $\cM(A)$: Radon measures on $A$.
  \item $\cM^+(A)$: nonnegative Radon measures on $A$. 
  \item Given $\nu\in\cM(\R^d)$ and letting $A\in \cB(\R^d)$, we denote by $\nu^+(A) := \sup_{B\in \cB(A)} \nu(B)$ and $\nu^-(A) := - \inf_{B\in \cB(A)} \nu(B)$ the upper and lower variation measures of $\nu$; the total variation measure of $\nu$ is $|\nu|(A):=\nu^+(A)+\nu^-(A)$ and its total variation norm is $\norm{\nu}_{\TV} := \abs{\nu}(\R^d)$. 
  \item $\cMtv(A)$: Radon measures on $A$ with finite total variation.
  \item $\cMtv^+(A):=\cM^+(A)\cap \cMtv(A)$. 
  \item $\cP(A)$: Borel probability measures on $A$.
  \end{itemize}
  
\subsection*{Graph}  
\begin{itemize}
  \item $\Rddiag := \set{(x,y)\in\R^d\times \R^d : x\ne y}$ is the off-diagonal of $\Rd\times\Rd$.    
  \item $\mu$ sets the underlying geometry of the state space; it belongs to $\cM^+(\R^d)$ and is sometimes referred to as base measure.
  \item $\eta$ is the edge weight function; it maps  $\Rddiag$ to $[0,\infty)$. 
  \item $G$ is the set of edges; i.e., $G= \{ (x,y)\in \Rddiag : \eta(x,y)>0\}$.
  \item $\cV^{\mathrm{as}}(G)$ is the set of antisymmetric vector fields on $G$; that is, $\cV^{\mathrm{as}}(G)= \set{v\: G \to \R : v^\top = - v}$.
\end{itemize}

\subsection*{Others} 
\begin{itemize}
  \item $T$ is a positive, finite final time.
  \item $\ACt:=\AC([0,T];\cMtv(\R^d))$ is the space of absolutely continuous curves with respect to $\norm{\cdot}_{\TV}$ from $[0,T]$ to $\cMtv(\R^d)$.
  \item Given $a\in\R$, $a_+:=\max\{0,a\}$ and $a_-:=(-a)_+$ are its positive and negative parts, respectively.
  \end{itemize}

\section{Introduction}

In this manuscript, we resume the analysis of Partial Differential Equations (PDEs) on graphs started in our previous work \cite{EsPaScSl-arma}, focusing this time on a larger class of nonlocal continuity equations. The main motivation for this study comes from data science, as graphs represent a relevant ambient space for data representation and classification \cite{Belkin02laplacianeigenmaps, GTS16, GTSspectral, JordanNg, KanVemVet04,RoithBungert22}. However, most of the results obtained so far in the literature are concerned with static problems rather than time-dependent ones.

In \cite{EsPaScSl-arma}, we studied the dynamics driven by nonlocal interaction energies on graphs, whose vertices are the random sample of a given underlying distribution. We interpreted the corresponding PDEs as gradient flows of the nonlocal interaction energies in the space of probability measures, equipped with a quasi-metric obtained from the dynamical transportation cost, following Benamou--Brenier \cite{BB2000}. In the recent papers \cite{HeinzePiSch-1,HeinzePiSch-2}, the analysis is extended to nonlocal cross-interaction systems on graphs with a nonlinear mobility, in the context of nonquadratic Finslerian gradient flows. In \cite{CraigTrillosSlepcev}, dynamics on graphs are shown to be useful for data clustering; indeed, the authors connect the mean shift algorithm with spectral clustering at discrete and continuum levels via Fokker--Planck equations on data graphs.

The study of equations on graphs represents a natural link with the discretization of continuous PDEs, gradient flows, and optimal transport related problems. We start mentioning structure preserving numerical schemes for evolution equations of gradient flow form (see for instance~\cite{CCH15,BailoCarrilloHu2018,CancesGallouetTodeschi2019,Bailo_etal2020,SchlichtingSeis21} and references therein); the use of upwind and similar interpolations showed also beneficial in preserving the second law of thermodynamics, i.e. the entropy decay. 
Inspired by the theory of numerical schemes for local conservation laws, in~\cite{Du2017} a new class of monotonicity-preserving nonlocal nonlinear conservation laws was proposed, in one space dimension. The latter work might be indeed interpreted as an equation on graphs, under some suitable assumptions on the kernel considered. In this regard, it may be interesting to further investigate on the extension of the present manuscript to other nonlocal conservation laws.

Another related question concerns the convergence of discrete optimal transport distances to its continuous counterpart, cf.~\cite{GLADBACH2020204,forkertEvolutionaryGammaConvergence2020,Gladbach2021}. Similarly, the variational convergence of discretization for evolution problems is investigated in~\cite{hraivoronska2022diffusive}. Here the discrete systems obtained can be also seen as special cases of the type of the evolution equations investigated in the current manuscript.
On a different note, we mention~\cite{PeletierRossiSavareTse2020}, where a direct gradient flow formulation of jump processes is recently established --- the authors consider driving energy functional containing entropies. The kinetic relations used there are symmetric, hence excluding for instance the upwind interpolation, which is our main example.

In this work, we consider continuity equations driven by a wide class of velocity fields, including those depending on the the unknown itself, and prove existence and uniqueness of measure-valued, as well as $L^p$-valued, solutions by means of Banach fixed-point theorem. This is a slightly different concept of solution than that used in \cite{EsPaScSl-arma}, where we established a Finslerian gradient flow framework for interaction energies.
As it becomes clear in the following, the geometry of the ambient space influences the analysis and requires novel considerations.

For ease of presentation, we describe the problem first on finite, undirected graphs. Let $X :=\{x_1, \dots, x_n\}\subset\R^d$ be the set of vertices and consider the edge weights $w_{x,y} \geq 0$, satisfying $w_{x,y} = w_{y,x}$ for all $x,y \in X$. For simplicity, we impose that $w_{x,x}=0$. We consider a mass distribution $\rho\colon X \to [0, \infty)$ with $\sum_{x \in X} \rho_x =1$. An example of Ordinary Differential Equations (ODEs) on such a graph preserving the total mass takes the form
\begin{equation}
     \frac{\di \rho_x}{\di t} = - \frac{1}{2} \sum_{y\in X} \bra[\big]{j_{x,y} - j_{y,x}} w_{x,y}, 
     \label{eq:intro:CE}
\end{equation}
The time variation of the mass at a vertex $x$ is triggered by the outgoing and ingoing fluxes, described by the function $j$.
We will be interested in the situation where the flux is obtained by a vector field $v: X\times X \to \R$, along which the mass density $\rho$ is advected. The vector field might itself depend also on the mass density in a local or nonlocal as well as linear or nonlinear way. 
On graphs, the fluxes and velocities $j,v:X\times X\to\R$ are defined on the edges, whereas the mass on the single vertices. 
For this reason, the relation between flux and velocity strongly depends on the chosen mass interpolation on vertex pairs. We consider a general interpolation function $\Phi:\R^3\to\R$ to understand its role for the dynamics better. Hence, the continuity equation in flux-form~\eqref{eq:intro:CE} is complemented by constitutive equation relating the velocity to the flux
\begin{equation*}
      j_{x,y} = \Phi \bra*{\frac{1}{n}\rho_x,\frac{1}{n}\rho_y,v_{x,y}}.
\end{equation*}
In \cite{EsPaScSl-arma}, we also considered the case of graphs with infinite vertices, namely, the PDEs resulting from letting $n$ to $\infty$. Thus, we introduced a unified setup entailing both discrete and continuum interpretations. 

The vertices are points in $\R^d$ and the edges are determined by a nonnegative symmetric weight function $\eta \colon  \Rddiag \to [0, \infty)$; indeed, the set of edges is $G:=\set{ (x,y)\in \Rddiag : \eta(x,y)>0}$, where $\Rddiag = \set{(x,y)\in\R^d\times \R^d : x\ne y}$. From the discrete setting, the set of vertices is replaced by a general measure on $\R^d$, denoted $\mu$; a discrete graph with vertices $X:=\{x_1, \dots, x_n\} \subset \R^d$ corresponds then to $\mu$ being the empirical measure of $X$, i.e., $\mu = \frac{1}{n} \sum_{i=1}^n \delta_{x_i}$. This generalization is natural in applications to machine learning, since data have the form of a point cloud randomly sampled from some measure in Euclidean space. With this notation, the PDEs we study have the form
\begin{subequations}
\label{eq:intro-CE-PDE}
\begin{align}
    \partial_t\rho + \dgrad\cdot\bj &= 0 ,\\
    \bj&=F^\Phi(\mu;\rho,v),
\end{align}
\end{subequations}
where $\dgrad$ and $\dgrad\cdot$ are the nonlocal gradient and divergence, respectively (cf. Definition~\ref{def:nl-grad-div} below), and $F^\Phi$ is an interpolation-dependent flux.

In \cite{EsPaScSl-arma}, we considered the upwind interpolation between vertices, as it is a reasonable choice for both the dynamics and the gradient flow structure. More precisely, we fixed $\Phi(a,b,v)=av_+-bv_-$ and introduced the following nonlocal continuity equation:
\begin{equation*}
    \partial_t\rho_t(x)+\int_\Rd\left(\rho_t(x)v_t(x,y)_+-\rho_t(y)v_t(x,y)_-\right)\eta(x,y)\di \mu(y) = 0,
\end{equation*}
for $\mu\text{-a.e. } x \in \Rd$. Note that we let here $\rho\ll\mu$ for ease of presentation, although it is not necessary. We focused on the specific case of the nonlocal-interaction equation, that is,
\begin{align}
\label{eq:nlnl-intro}
\begin{split}
    \partial_t\rho_t(x) & = - \int_\Rd j_t(x,y) \eta(x,y) \di\mu(y) =: - (\dgrad \cdot j_t) (x),  \\
    j_t(x,y) &= \rho_t(x) v_t(x,y)_+ -  \rho_t(y) v_t(x,y)_-, \\
    v_t(x,y) &= - \left(K*\rho_t(y) - K*\rho_t(x) + P(y) - P(x)\right).
\end{split} \tag{NL$^2$IE}
\end{align}
The equation above is actually a particular case of a nonlocal conservation law, as the velocity field depends on the configuration itself. The theory of generalised Wasserstein gradient flows was shown to be useful to prove existence of weak solutions to \eqref{eq:nlnl-intro} and to provide information on the underlying geometry structure of the configuration space, which is the set of probability measures with finite second-order moments. The latter, equipped with quasi-metric introduced in \cite{EsPaScSl-arma}, has Finsler structure, rather than Riemannian. Among others, open problems include the contractivity of the quasi-distance (cf.~\cite{OhtaSturm09,OhtaSturm12}), the stability and uniqueness of weak solutions.

Based on the above considerations, in this paper, we obtain existence and uniqueness of measure and $L^p$ solutions for the class of PDEs \eqref{eq:intro-CE-PDE} by means of a classical Banach fixed-point argument. This complements the analysis started in \cite{EsPaScSl-arma}, as it concerns general flux interpolations as well as a larger class of velocity fields. The structure of the graph influences the analysis of the equations in this setting. Indeed, some analogous properties in the Euclidean case are not easily derived, depending on the interpolation chosen. Therefore, as a byproduct of our study, we provide properties of the dynamics in relation to the interpolation considered, such as positivity preservation and $L^p$ regularity. To the best of our knowledge this is the first result in these directions.
\smallskip

The paper is structured as follows. We introduce preliminary notions in Section~\ref{sec:results} to explain the setup. Section~\ref{sec:nonl-cont-equat} is devoted to the Nonlocal Continuity Equation (NCE) and emphasizes the fundamental role of the flux interpolation. From there, we prove basic properties of the NCE, highlighting analogies with and differences from the more standard Euclidean setting. In Section~\ref{sec:nonl-nonl-inter}, we prove the main result of the manuscript, namely, the existence and uniqueness of measure solutions for the NCE. We include velocity fields depending on the solution itself, in which case we also refer to the NCE as a Nonlocal Conservation Law (NCL). Section~\ref{sec:lp} is focused on $L^p$ solutions and positivity preservation, only proven for the upwind interpolation.

\section{Setup}
\label{sec:results}

\subsubsection*{Nonlocal graph structure}

Let us fix a measure $\mu \in \cM^+(\R^d)$ and a measurable function $\eta\: \Rddiag \to [0,\infty)$, and set $G := \{(x,y)\in \Rddiag : \eta(x,y)>0\}$. We always assume following:
\begin{namedthm}{($\boldsymbol{\eta}$)}
  \namedlabel{hyp:eta}{{\bfseries ($\boldsymbol{\eta}$)}}
  $\eta$ is continuous, bounded and symmetric on $G$.
\end{namedthm}
\noindent We often refer to $\mu$ as the \emph{base measure} and to $\eta$ as the \emph{weight function}. In this sense, $(\mu,\eta)$ defines a, possibly uncountable, weighted, undirected graph. A finite graph would correspond to the base measure $\mu_n=\frac{1}{n}\sum_i\delta_{x_i}$ for a set of points $\{x_1,x_2,\dots,x_n\}$.

\subsubsection*{Total variation distance}

For two measures $\rho^1,\rho^2\in \cMtv(\R^d)$, we define their total variation distance by
\begin{equation*}
  \norm{\rho^1-\rho^2}_{\TV} = 2\sup_{A\in \cB(\R^d)} \abs*{\rho^1[A] - \rho^2[A]}.
\end{equation*}
The factor $2$ is present only for convenience since we restrict to measures with finite and equal total variation, so that $\norm{\rho^1- \rho^2}_{\TV}=\abs*{\rho^1-\rho^2}(\R^d)$. We equip the sets $\cMtv(\Rd)$ and $\cP(\Rd)$ with the total variation distance.

\subsubsection*{Gradients and divergences}

We recall here the notions of nonlocal gradient and divergence on $G$.

\begin{definition}[Nonlocal gradient and divergence] 
  \label{def:nl-grad-div}
  For any $\phi \: \Rd \to \R$, we define its \emph{nonlocal gradient} $\dgrad \phi \colon G \to \R$ by
  \begin{equation*} 
    \dgrad\phi(x,y)=\phi(y)-\phi(x) \quad \mbox{for all $(x,y)\in G$}.
  \end{equation*}
  For any Radon measure $\bj\in \cM(G)$, its \emph{nonlocal divergence} $\dgrad\cdot\bj \in \cM(\R^d)$ is defined as the adjoint of $\dgrad$ with respect to $\eta$, i.e., for any $\phi \: \Rd \to \R$ continuous and vanishing at infinity, there holds
  \begin{equation*}
    \begin{aligned}
      \int_\Rd \phi \dx\dgrad \cdot \bj &= - \frac12\iint_G\dgrad\phi(x,y) \eta(x,y)\di\bj(x,y)\\
      &= \frac{1}{2}\int_{\Rd} \phi(x) \int_{\Rd\setminus\{x\}} \eta(x,y) \bra*{ \di\bj(x,y) - \di\bj(y,x)}.
    \end{aligned}
  \end{equation*}
  In particular, for $\bj$ antisymmetric, that is, $\bj\in \cM(G)$ and $\bj^\top = - \bj$, denoted $\bj\in\cM^{\mathrm{as}}(G)$, we have
  \begin{equation*} 
    \int_\Rd \phi \dx\dgrad\cdot\bj= \iint_G \phi(x) \eta(x,y) \di\bj(x,y) .
  \end{equation*}
\end{definition}
With this notion of divergence, we can consider a nonlocal continuity equation (cf. Definition \ref{def:nce-flux-form} below) defined on a suitable subclass of absolutely continuous curves denoted by $\ACt=\AC([0,T];\cMtv(\R^d))$. More precisely, $\ACt$ is the set of curves from $[0,T]$ to $\cMtv(\Rd)$
such that there exists $m\in L^1([0,T])$ with
\begin{equation*}
  \norm{\rho_s-\rho_t}_{\TV} \leq \int_s^t m(r) \dx{r}, \qquad \text{for all } 0\leq s< t\leq T. 
\end{equation*}

\section{Nonlocal Continuity Equation (NCE)}
\label{sec:nonl-cont-equat}

In this section, we study the nonlocal continuity equation on the graph defined by $(\mu,\eta)$. 
First, we define the concept of measure-valued solution.

\begin{definition}[Measure-valued solution for the NCE]
  \label{def:nce-flux-form}
  A measurable pair $(\rho,\bj)\colon [0,T] \to \cMtv(\Rd)\times \cM(G)$ is a \emph{measure-valued} (or simply \emph{measure}) solution to the NCE, denoted as 
  \begin{equation}\tag{NCE}
    \label{eq:nce-expr}
    \partial_t \rho + \dgrad \cdot\bj = 0,
  \end{equation}
  provided that, for any $A\in \cB(\Rd)$, it holds that
  \begin{listi}
    \item $\rho \in \ACt$;
    \item $(\bj_t)_{t\in[0,T]}$ is Borel measurable and $\bra*{t\mapsto \dgrad\cdot\bj_t[A]} \in L^1([0,T])$;
    \item $(\rho,\bj)$ satisfies,
    \begin{equation*}
      \rho_t[A] +  \int_{0}^t \dgrad \cdot \bj_s[A] \di s = \rho_{0}[A]  \qquad \text{for a.e.~$t \in [0,T]$};
    \end{equation*}
  \end{listi}
in this case, we write $(\rho,\bj)\in\CE([0,T])$.
\end{definition}
In the above definition, the absolute continuity of a measure solution $\rho$ is ensured by the integrability of the flux divergence.
Moreover, $\rho$ does not need to be nonnegative, i.e., so that $\rho_t\geq0$ for a.e. $t\in[0,T]$, for the definition to make sense; in fact, positivity preservation is analyzed in Section~\ref{sec:lp}.

\subsection{Flux interpolations}

We provide a class of flux interpolations generalizing our work in~\cite{EsPaScSl-arma}, where we only studied the upwind interpolation. We consider a minimal set of assumptions on the interpolation to achieve well-posedness.
\begin{definition}[Admissible flux interpolation]
  \label{def:AdmFlux}
  A measurable function $\Phi\colon\R^3 \to \R$ is called an \emph{admissible flux interpolation} provided that the following conditions hold:
  \begin{listi}
    \item $\Phi$ satisfies 
    \begin{equation}\label{ass:normalisation_phi}
        \Phi(0,0;v)=\Phi(a,b;0)=0,\quad \text{ for all } a,b,v\in\R; 
    \end{equation}
    \item $\Phi$ is \emph{Lipschitz} in its arguments in the sense that, for some $L_{\Phi}>0$, any $a,b,c,d,v,w\in\R$, it holds
	 \begin{subequations}\label{eq:Phi-cond}
		\begin{align}
			\abs*{\Phi(a,b;w) - \Phi(a,b;v)} &\leq L_{\Phi} (|a|+|b|) \abs*{w-v}; \label{eq:Phi-cond-1}\\
			\abs*{\Phi(a,b;v) - \Phi(c,d;v)} &\leq L_{\Phi} (\abs*{a-c} + \abs*{b-d}) \abs*{v}\label{eq:Phi-cond-2};
		\end{align}
	\end{subequations}
    \item $\Phi$ is \emph{positively one-homogeneous} in its first and second arguments, that is, for all $\alpha>0$ and $(a,b,w)\in \R^3$, it holds 
    \begin{equation*}
      \Phi(\alpha a,\alpha b; w) = \alpha \Phi(a,b;w).
    \end{equation*}
    \end{listi}
\end{definition}

\begin{exm}
  Here follow examples of admissible flux interpolations $\Phi$ according to Definition \ref{def:AdmFlux}.
  \begin{itemize}
    \item \emph{Upwind interpolation.} One important case 
    is given by the \emph{upwind} interpolation $\Phi_{\mathrm{upwind}}$ defined as
    \begin{equation} 
      \label{eq:Phi:upwind}
      \Phi_{\mathrm{upwind}}(a,b;w) = a w_+ - b w_- \qquad\text{for } (a,b,w)\in \R^3.
    \end{equation}
    \item \emph{Mean multipliers.} Another case is \emph{product} interpolation $\Phi_{\mathrm{prod}}$, which is of the form
     \begin{equation*}
       \Phi_{\mathrm{prod}}(a,b;w) = \phi(a,b) w \qquad\text{for } (a,b,w)\in \R^3,
     \end{equation*}
     with $\phi\:\R^2\to\R$ any measurable function satisfying, for some $L_\Phi>0$, 
     \begin{align*}
         &|\phi(a,b)|\leq L_\Phi\max\set*{|a|,|b|}, \\
         &|\phi(a,b)-\phi(c,d)|\le L_\Phi(|a-c|+|b-d|),\\
         &\phi(\alpha a, \alpha b)= \alpha \phi(a,b),\\
         &\phi(a,b)=\phi(b,a),
     \end{align*}
    for all $\alpha \geq 0$ and $a,b,c,d\in\R$. Common choices for $\phi$ are as below:
    \begin{itemize}
       \item \textit{Arithmetic mean.} $\phi_{\mathrm{AM}}(a,b):= \frac{a+b}{2}$;
        \item \textit{Minimal mean.} $\phi_{\mathrm{min}}(a,b):=\min\{a,b\};$
        \item \textit{Maximal mean.} $\phi_{\mathrm{max}}(a,b):=\max\{a,b\}.$
     \end{itemize}
    We note that some common choices, such as the geometric mean and the logarithmic mean, do not satisfy the Lipschitz condition stated above, which is essential for the fixed-point argument we use later to establish well-posedness. This situation may be remedied by a suitable Lipschitz regularization of those examples, although we do not explore this possibility in the present paper.
  \end{itemize}
\end{exm}

\begin{definition}[Admissible flux]\label{def:FPhi}
  Let $\Phi$ be an admissible flux interpolation, and let $\rho\in\cMtv(\Rd)$ and $w\in\cV^{\mathrm{as}}(G) := \set{v\: G \to \R : v^\top = - v}$. Furthermore, take $\lambda\in\cM^+(\Rdd)$ such that $\rho\otimes\mu,\mu\otimes\rho\ll \lambda$ (e.g., $\lambda = \abs{\rho}\otimes \mu + \mu \otimes \abs{\rho}$). Then, the \emph{admissible flux} $F^\Phi[\mu;\rho,w]\in\cM(G)$ at $(\rho,w)$ is defined by
  \begin{equation}
    \label{eq:def:FPhi}
    \dx F^\Phi[\mu;\rho,w] = \Phi\bra*{\pderiv{(\rho \otimes \mu)}{\lambda},\pderiv{(\mu\otimes\rho)}{\lambda}; w } \dx \lambda.
  \end{equation} 
  \end{definition}
 Note that because of the one-homogeneity of $\Phi$, the expression in \eqref{eq:def:FPhi} is independent of the choice of $\lambda$. The nonlocal continuity equation of Definition~\ref{def:nce-flux-form} with the notation of Definition~\ref{def:FPhi} reads
\begin{equation}
    \label{eq:cont-eq-strong}
    \partial_t \rho + \dgrad \cdot F^\Phi[\mu;\rho_t,v_t] = 0\tag{NCE},
  \end{equation}
with integral form, for all $A\in \cB(\R^d)$, given by
    \begin{equation}
     \label{eq:strong-sol}
     \rho_t[A] +  \int_{0}^t \dgrad \cdot F^\Phi[\mu;\rho_s,v_s][A] \di s = \rho_{0}[A],  \qquad \text{for a.e. $t \in [0,T]$}.
    \end{equation}

\subsection{Basic properties}

We highlight some properties of~\eqref{eq:cont-eq-strong} analogous to those in Euclidean setting, though intrinsically different due to the underlying graph structure. The well-posedness is treated in Section~\ref{sec:nonl-nonl-inter}, where we consider a more general scenario, in particular including~\eqref{eq:cont-eq-strong}. 

\begin{proposition}[Integrability, support and mass preservation for the NCE]
  \label{prop:well-posed}
  Let $\rho_0\in\cMtv(\R^d)$ and let $v\:[0,T] \to \cV^{\mathrm{as}}(G)$ satisfy, for some $C_v>0$,
  \begin{equation}
    \label{eq:basic-strong:eta:v}
     \int_0^T \sup_{x\in\Rd}\int_{\Rd\setminus\{x\}} |v_t(x,y)| \eta(x,y) \di \mu(y) \leq C_v .
  \end{equation} 
Let also $\Phi$ be an admissible flux interpolation and $\rho: [0,T]\to \cMtv(\R^d)$ be such that~\eqref{eq:strong-sol} is satisfied. Then, the following properties hold:
\begin{itemize}
	\item $t \mapsto \dgrad \cdot F^\Phi[\rho_t,v_t][A] \in L^1([0,T])$ \emph{(flux integrability)};
    \item $\rho\in L^\infty([0,T];\cMtv(\R^d))$ \emph{(time boundedness)};
    \item $\rho_t[\R^d]=\rho_0[\R^d]$ for all $t\in [0,T]$ \emph{(mass preservation)};
    \item $\rho\in \ACt$ \emph{(absolute continuity)};
    \item if $\supp \rho_0\subseteq \supp \mu$, then $\supp \rho_t \subseteq \supp \rho_0$ for a.e. $t\in [0,T]$ \emph{(support inclusion)}. 
\end{itemize}
\end{proposition}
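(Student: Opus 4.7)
The plan is to extract a single $\TV$-bound on $\dgrad\cdot F^\Phi$ and then let Gronwall together with elementary manipulations based on Fubini deliver the five claims.

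First, I would combine the normalisation $\Phi(0,0;v)=0$ with the Lipschitz bound \eqref{eq:Phi-cond-2} to obtain the pointwise inequality $\abs{\Phi(a,b;v)}\leq L_\Phi(\abs{a}+\abs{b})\abs{v}$, and substitute it into the flux definition \eqref{eq:def:FPhi} with the convenient choice $\lambda=\abs{\rho}\otimes\mu+\mu\otimes\abs{\rho}$. This yields the measure inequality $\abs{\di F^\Phi[\mu;\rho,v]}\leq L_\Phi\abs{v}(\di\abs{\rho}\otimes\mu+\di\mu\otimes\abs{\rho})$. Testing the divergence formula of Definition~\ref{def:nl-grad-div} against $\phi$ with $\norm{\phi}_\infty\leq 1$, using Fubini together with $\eta(x,y)=\eta(y,x)$ and $\abs{v(x,y)}=\abs{v(y,x)}$ (the latter from $v\in\cV^{\mathrm{as}}(G)$), the two contributions from the antisymmetrization collapse into
\begin{equation*}
  \norm{\dgrad\cdot F^\Phi[\mu;\rho,v]}_{\TV} \leq 2L_\Phi\norm{\rho}_{\TV}V_v,\qquad V_v:=\sup_{x\in\R^d}\int_{\R^d\setminus\{x\}}\eta(x,y)\abs{v(x,y)}\di\mu(y).
\end{equation*}
This is the key estimate driving the rest.

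Next, the integral form \eqref{eq:strong-sol} combined with the key estimate produces the Volterra-type inequality
\begin{equation*}
  \norm{\rho_t}_{\TV}\leq \norm{\rho_0}_{\TV}+2L_\Phi\int_0^t V_{v_s}\norm{\rho_s}_{\TV}\di s,
\end{equation*}
and since $s\mapsto V_{v_s}\in L^1([0,T])$ by \eqref{eq:basic-strong:eta:v}, the integral Gronwall lemma yields $\norm{\rho_t}_{\TV}\leq\norm{\rho_0}_{\TV}e^{2L_\Phi C_v}$, i.e.\ time boundedness. Feeding this uniform bound back into the key estimate makes $s\mapsto\norm{\dgrad\cdot F^\Phi[\mu;\rho_s,v_s]}_{\TV}$ dominated by the $L^1$ function $m(s):=2L_\Phi\norm{\rho_0}_{\TV}e^{2L_\Phi C_v}V_{v_s}$, which gives flux integrability; the corresponding $\TV$-bound on increments $\norm{\rho_t-\rho_s}_{\TV}\leq\int_s^t m(r)\di r$ then certifies $\rho\in\ACt$.

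Mass preservation follows from the integral form at $A=\R^d$: the divergence formula combined with Fubini and the symmetry of $\eta$ forces $\dgrad\cdot F^\Phi[\R^d]=0$, since the terms $\iint_G\eta\,\di F^\Phi(x,y)$ and $\iint_G\eta\,\di F^\Phi(y,x)$ coincide under the change of variable $(x,y)\mapsto(y,x)$, with the already-established flux integrability justifying the approximation of $\phi\equiv 1$ by functions vanishing at infinity. For the support inclusion, the flux is absolutely continuous with respect to $\abs{\rho}\otimes\mu+\mu\otimes\abs{\rho}$, so $\supp F^\Phi\subseteq(\supp\rho\times\supp\mu)\cup(\supp\mu\times\supp\rho)$; consequently $\dgrad\cdot F^\Phi$ is supported in $\supp\mu\cup\supp\rho$, and under the hypothesis $\supp\rho_0\subseteq\supp\mu$ a continuity-in-time bootstrap based on the absolute continuity established above propagates the claimed support control to all $t$.

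The main obstacle is the circularity of the Gronwall step: the a priori bound on $\dgrad\cdot F^\Phi$ depends on $\norm{\rho_s}_{\TV}$, which is itself only controlled through Gronwall. Closing the loop requires $t\mapsto\norm{\rho_t}_{\TV}$ to be measurable, which I would derive from the standing measurability hypothesis on $\rho$ together with the lower semicontinuity of $\norm{\cdot}_{\TV}$. The support statement is technically the most delicate point, as it requires carefully propagating a pointwise-in-time support condition through the integral equation and handling any singular part of $\rho_s$ with respect to $\mu$.
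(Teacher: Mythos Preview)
Your approach for the first four items (flux integrability, time boundedness, mass preservation, absolute continuity) is essentially the paper's: extract the pointwise bound $\abs{\Phi(a,b;v)}\leq L_\Phi(\abs a+\abs b)\abs v$ (the paper obtains it from \eqref{eq:Phi-cond-1} with $w=0$ and $\Phi(a,b;0)=0$, you from \eqref{eq:Phi-cond-2} with $(c,d)=(0,0)$ and $\Phi(0,0;v)=0$; both routes give the same inequality), plug into the divergence, and Gronwall. The paper's mass-preservation argument is marginally more direct: it simply notes $\dgrad\chi_{\R^d}=0$ in the divergence formula rather than unwinding Fubini and the change of variable $(x,y)\mapsto(y,x)$.

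For the support inclusion the paper does \emph{not} bootstrap through absolute continuity. Instead it takes $A=\R^d\setminus\supp\mu$ directly in the integral equation \eqref{eq:strong-sol} and observes that, since $\mu$ vanishes on $A$, the density arguments of $\Phi$ carrying a factor of $\mu$ drop out (for instance $\pderiv{(\mu\otimes\rho_s)}{\lambda}=0$ on $A\times\supp\mu$). What survives is bounded by $L_\Phi\,\overline v_s\,\abs{\rho_s}[A]$, yielding the closed inequality
\[
\abs{\rho_t}[A]\leq\abs{\rho_0}[A]+L_\Phi\int_0^t\overline v_s\,\abs{\rho_s}[A]\di s,
\]
and Gronwall gives $\abs{\rho_t}[A]\leq e^{L_\Phi C_v}\abs{\rho_0}[A]=0$. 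This is the same Gronwall mechanism you already used for time boundedness, simply run on the complement of $\supp\mu$, and it sidesteps both the continuity bootstrap and the singular-part concern you flagged at the end. Your support-of-the-flux observation is correct but, as you yourself note, does not close the argument on its own; the paper's set-restricted Gronwall does.
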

\begin{proof}
We split the proof according to each item above.

\medskip\noindent
\textit{Flux integrability---}For all $A\in\cB(\Rd)$ and $t\in[0,T]$, we have
  \begin{align}
    \dgrad\cdot F^\Phi[\mu;\rho_t,v_t][A]&=-\frac{1}{2}\iint_G\dgrad\chi_A(x,y)\eta(x,y)\di F^\Phi[\mu;\rho_t,v_t](x,y) \nonumber\\
    &=-\frac{1}{2}\iint_G\dgrad\chi_A\Phi\bra*{\pderiv{(\rho_t\otimes \mu)}{\lambda},\pderiv{(\mu\otimes\rho_t)}{\lambda}; v_t }\eta \dx \lambda. \nonumber
  \end{align}
Next, using~\eqref{ass:normalisation_phi} and \eqref{eq:Phi-cond-1} with $w=0$, symmetry of $\eta$, antisymmetry of $v$, and~\eqref{eq:basic-strong:eta:v}, we estimate, for any $t\in [0,T]$, that
  \begin{align}
    \int_0^t  \abs*{\dgrad\cdot F^\Phi[\mu;\rho_s,v_s][A]} \di s  &\le \frac{L_\Phi}{2}\! \int_0^t \iint_G |v_s| \eta\left(\pderiv{|\rho_s|\otimes \mu}{\lambda}\!+\!\pderiv{\mu\otimes |\rho_s|}{\lambda}\right) \dx \lambda \dx s \nonumber\\
    &\le L_\Phi\int_0^t \iint_G|v_s(x,y)|\eta(x,y)\di\mu(y)\di|\rho_s|(x)  \dx{s} \nonumber\\
    &\leq L_{\Phi}\int_0^t \overline{v}_s \, |\rho_s|[\Rd]\di s , \label{eq:intergrability_flux:p0}
  \end{align}
  where $\overline{v}_s := \sup_{x\in \R^d} \int_{\Rd\setminus\{x\}} |v_t(x,y)| \eta(x,y) \di \mu(y)$.

\medskip\noindent
\textit{Time boundedness---} For a.e. $t\in[0,T]$, the integral form~\eqref{eq:strong-sol} entails
  \[
    \abs{\rho_t}[\R^d] \leq \abs{\rho_0}[\R^d] + L_{\Phi}\int_0^t \overline{v}_s |\rho_s|[\Rd] \di s .
  \]
  Then, Gronwall's inequality provides, for a.e. $t\in [0,T]$, the a priori bound $\abs{\rho_t}[\R^d]\leq \abs{\rho_0}[\R^d] e^{L_\Phi C_v} < \infty$. Hence $\rho\in L^\infty([0,T];\cMtv(\R^d))$.
  
  \medskip\noindent
  \textit{Mass preservation---}This is a simple consequence of $\dgrad\chi_{\R^d} = 0$, which yields $\dgrad\cdot F^\Phi[\rho_t,v_t][\R^d]=0$ for all $t\in[0,T]$. Hence \eqref{eq:strong-sol} implies that $\rho$ is mass preserving. We also infer the integrability of the flux from~\eqref{eq:intergrability_flux:p0}.

\medskip\noindent
  \textit{Absolute continuity---}For any $A\in \cB(\R^d)$, we have  $t\mapsto |\dgrad \cdot F^\Phi[\mu;\rho_t,v_t][A]|$ belongs to $L^1([0,T])$. Hence $\rho\in \ACt$.
  
  \medskip\noindent
   \textit{Support inclusion---}Note that for $A=\Rd\setminus\supp\mu$ and a.e. $t\in[0,T]$, the solution $\rho$ satisfies
\begin{align*}
    \rho_t[A]=\rho^0[A]&-\frac{1}{2}\int_0^t\iint_{G\cap A\times\supp\mu}\Phi\bra*{\pderiv{(\rho_s\otimes \mu)}{\lambda},0; v_s}\di \lambda(x,y)\di s \\
    &+\frac{1}{2}\int_0^t\iint_{G\cap \supp\mu\times A}\Phi\bra*{0,\pderiv{(\mu\otimes \rho_s)}{\lambda}; v_s}\di \lambda(x,y)\di s;
\end{align*}
thus, we get the estimate
\begin{align*}
    \abs*{\rho_t}[A] &\leq \abs*{\rho_0}[A] + L_\Phi\int_0^t \abs*{\rho_s}[A] \overline{v}_s \dx{s},
\end{align*}
and, by Gronwall's inequality, we also get $\abs*{\rho_t}[A]\leq e^{C_V L_\Phi} \abs*{\rho_0}[A]$. We conclude by noting that $\abs*{\rho_0}[A]=0$ by assumption.
\end{proof}
\begin{rem}
  Condition~\eqref{eq:basic-strong:eta:v} is the analogue of the weak-compressibility assumption classically used for the continuity equation $\partial_t \rho_t + \nabla\cdot \bra*{v_t \rho_t}=0$, with vector field $v:[0,T]\times \R^d \to \R^d$ (see, e.g.,~\cite{DiPernaLions1989, Ambrosio_inv2004}).
  More precisely, in the Euclidean setting, the assumption in~\eqref{eq:basic-strong:eta:v} takes the form $\nabla \cdot v \in L^1([0,T];L^\infty(\R^d))$ and is used to control of $\norm{\rho}_{L^\infty([0,T];L^p(\R^d))}$, for any $p\in [1,\infty)$ (cf.~\cite[Prop~II.1.]{DiPernaLions1989}).
  In our setting, the structural properties of the graph, encoded in $(\mu,\eta)$ and the flux interpolation $\Phi$, require a refined analysis involving a careful regularization argument when treating $L^p$ solutions; we refer the reader to Section~\ref{sec:lp}, where those questions are studied for solutions possessing a density.
\end{rem}

\section{Nonlocal Conservation Law (NCL)}
\label{sec:nonl-nonl-inter}

We focus here on the general case where the velocity field depends on the solution itself. More precisely, we provide well-posedness to~\eqref{eq:cont-eq-strong} for a vector field of the form
\begin{equation*}
  v_t(x,y) = V_t[\rho_t](x,y) \qquad \text{for all $t\in[0,T]$},
\end{equation*}
for some $V\: [0,T] \times \cMtv(\Rd) \to \cV^{\mathrm{as}}(G)$. For the reader's convenience we write the following straightforward generalization of Definition~\ref{def:nce-flux-form} to what we refer to as \emph{Nonlocal Conservation Law} (NCL).
\begin{definition}[Measure-valued solution to the NCL] 
  \label{def:strongNCL} 
  Given an admissible flux interpolation $\Phi$ and a measurable map $V\: [0,T] \times \cMtv(\Rd) \to \cV^{\mathrm{as}}(G)$, a curve $\rho \: [0,T] \to \cMtv(\Rd)$ is said to be a \emph{measure-valued} (or simply \emph{measure}) solution to the NCL, denoted as
  \begin{equation}
    \label{eq:ncl-eq-strong}
    \partial_t \rho + \dgrad \cdot F^\Phi[\mu;\rho,V(\rho)] = 0,\tag{NCL}
  \end{equation}
  provided that, for any $A\in \cB(\Rd)$, it holds that
  \begin{listi}
    \item $\rho\in \ACt$;
    \item $t \mapsto \dgrad \cdot F^\Phi[\mu;\rho_t,V_t(\rho_t)][A] \in L^1([0,T])$;
    \item $\rho$ satisfies
    \begin{equation}
      \label{eq:ncl-strong-sol}
      \rho_t[A] +  \int_{0}^t \dgrad \cdot F^\Phi[\mu;\rho_s,V_s(\rho_s)][A] \di s = \rho_{0}[A]  \qquad \text{for a.e. $t \in [0,T]$}.
    \end{equation}
  \end{listi} 
\end{definition}
\begin{exm}\label{ex:nlnlie}
  An important example of a map $V$ in Definition \ref{def:strongNCL} is that stemming from the convolution with an interaction \emph{kernel} (or \emph{potential}) $K\:\Rd\times\Rd\to\R$, which yields the \emph{Nonlocal Nonlocal Interaction Equation} (NL$^2$IE), to which we can add an external potential $P\:\Rd\to\R$. Namely, in this case, for $\rho\:[0,T] \to \cMtv(\Rd)$, $t\in [0,T]$ and $(x,y)\in G$, the vector field $V$ is given by
\begin{equation*}
  V_t[\rho_t](x,y) = -\dgrad (K*\rho_t)(x,y) - \dgrad P(x,y) .
\end{equation*}
When the interpolation is chosen to be the upwind one \eqref{eq:Phi:upwind}, we get the equation studied in the optimal-transport, weak-measure setting of~\cite{EsPaScSl-arma}.
\end{exm}


Our well-posedness proof of~\eqref{eq:ncl-eq-strong}, and thus \eqref{eq:cont-eq-strong}, is based on a fixed-point argument and only applies to measures with fixed total variation, which is consistent with the mass-preservation property from Proposition \ref{prop:well-posed}. For all $M>0$, we introduce the notation
\begin{align*}
  \ACt^M = \AC([0,T];\cMtv^M(\Rd)), \qquad \cMtv^M(\Rd) = \set*{\rho\in\cMtv : \abs*{\rho}[\Rd] = M}.
\end{align*}
Note that, for any $\rho^0,\rho^1\in\cMtv^M(\Rd)$, we have the identity
\begin{equation*}
  \norm{\rho^1-\rho^2}_{\TV} = 2\sup_{A\in \cB(\R^d)} \abs*{\rho^1[A] - \rho^2[A]}=|\rho^1-\rho^2|(\Rd).
\end{equation*}
Throughout this section we fix $M\geq0$, $\rho^0\in\cMtv^M(\Rd)$ and $\Phi$ an admissible flux interpolation (cf. Definition \ref{def:AdmFlux}). With any $V\colon [0,T] \times \cMtv^M(\Rd) \to \cV^{\mathrm{as}}(G)$ such that, for some $C_V>0$,
\begin{equation}\label{ass:compressibility:V}
  \sup_{t\in [0,T]} \sup_{\rho\in\cMtv^M(\Rd)} \sup_{x\in \Rd}  \int_{\Rd\setminus\{x\}} |V_t[\rho](x,y)| \eta(x,y) \di \mu(y)\dx t \leq C_V,
\end{equation}
we associate the \emph{solution map} $S_T^V\colon \ACt^M \to \ACt^M$, defined, for $t \in [0,T]$ and $A\in \cB(\Rd)$, by
\begin{equation*}
  S_T^V(\rho)(t)[A] := \rho^0[A] - \int_0^t \dgrad \cdot F^\Phi[\mu;\rho_s,V_s(\rho_s)][A]\di s .
\end{equation*}
Note that \eqref{ass:compressibility:V} is an $L^\infty(L^\infty)$-type of bound for the nonlocal divergence; it is thus slightly stronger than the similar~\eqref{eq:basic-strong:eta:v} of $L^1(L^\infty)$-type under which we have boundedness of solutions in Proposition~\ref{prop:well-posed}.

We establish well-posedness under a Lipschitz assumption on $\rho\mapsto V[\rho]$ on the space $\ACt$, which we endow with the distance $\dT$ defined by
\begin{equation*}
  \dT(\rho,\sigma) \!=\! \norm{\rho-\sigma}_{L^\infty([0,T];\cMtv(\R^d))} \!=\! \sup_{t\in [0,T]} \norm{\rho_t-\sigma_t}_{\TV} \quad \text{for all $\rho,\sigma\in\ACt$}.
\end{equation*}
\begin{lemma}\label{lem:S-Lip}
  Let $V\colon [0,T] \times \cMtv^M(\Rd) \to \cV^{\mathrm{as}}(G)$ satisfy the uniform-compressibility assumption~\eqref{ass:compressibility:V} for some $C_V\in (0,\infty)$ and suppose that there exists a constant $L_V\ge0$ such that, for all $t\in [0,T]$ and all $\rho,\sigma \in \cMtv^M(\Rd)$,
  \begin{equation}
    \label{ass:Lipschitz:v}
      \sup_{x\in \Rd}\int_{\Rd\setminus\{x\}} |V_t[\rho](x,y) - V_t[\sigma](x,y)| \eta(x,y) \di\mu(y)\di t \leq L_V \norm{\rho - \sigma}_{\TV}. 
  \end{equation}
Then, for all $\rho,\sigma \in \ACt^M$, the contraction estimate
  \begin{equation*}
    \dT(S_T^V(\rho),S_T^V(\sigma)) \leq \alpha T \dT(\rho,\sigma) ,
  \end{equation*}
  holds  for $\alpha:= L_{\Phi}\bra*{M L_V + C_V}$, where $L_\Phi$ is as in \eqref{eq:Phi-cond}.
  
  In particular, for $T>0$ such that $T<1/\alpha$, there exists a unique measure solution $\rho$ to \eqref{eq:ncl-eq-strong} on $[0,T]$ such that $\rho_{0} = \rho^0$.
\end{lemma}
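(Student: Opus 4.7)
The plan is to apply Banach's fixed-point theorem to $S_T^V$ on a complete subset of $(\ACt,\dT)$---e.g., the curves starting at $\rho^0$ with constant total mass $\rho^0[\Rd]$---so that the crux is the contraction estimate. Subtracting the definitions of $S_T^V(\rho)$ and $S_T^V(\sigma)$,
\begin{equation*}
  \bra*{S_T^V(\rho)(t)-S_T^V(\sigma)(t)}[A] = -\int_0^t \dgrad\cdot \bra[\big]{F^\Phi[\mu;\rho_s,V_s(\rho_s)]-F^\Phi[\mu;\sigma_s,V_s(\sigma_s)]}[A]\dx s,
\end{equation*}
and from $\abs*{\chi_A(x)-\chi_A(y)}\le 1$ in Definition~\ref{def:nl-grad-div} one derives the elementary bound $\sup_A\abs*{\dgrad\cdot F[A]}\le\tfrac12\iint_G\eta\dx{\abs*{F}}$, valid for any $F\in\cM(G)$. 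Abbreviating $\Delta F_s:=F^\Phi[\mu;\rho_s,V_s(\rho_s)]-F^\Phi[\mu;\sigma_s,V_s(\sigma_s)]$, it therefore suffices to bound $\iint_G\eta\dx{\abs*{\Delta F_s}}$ by a multiple of $\norm*{\rho_s-\sigma_s}_{\TV}$, pointwise in $s$.

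For this, fix $\lambda\in\cM^+(\Rdd)$ dominating $|\rho_s|\otimes\mu$, $\mu\otimes|\rho_s|$, $|\sigma_s|\otimes\mu$, and $\mu\otimes|\sigma_s|$ simultaneously (their sum works) and express both fluxes through \eqref{eq:def:FPhi} against this common $\lambda$; the one-homogeneity of $\Phi$ makes the representation $\lambda$-independent. Letting $\alpha_\rho,\beta_\rho$ be the $\lambda$-densities of $\rho_s\otimes\mu$ and $\mu\otimes\rho_s$ (and analogously for $\sigma$), telescope through the intermediate value $\Phi(\alpha_\rho,\beta_\rho;V_s(\sigma_s))$ and apply \eqref{eq:Phi-cond-1} and \eqref{eq:Phi-cond-2} to get
\begin{equation*}
  \abs*{\pderiv{\Delta F_s}{\lambda}} \le L_\Phi\bra*{\abs*{\alpha_\rho}+\abs*{\beta_\rho}}\abs*{V_s(\rho_s)-V_s(\sigma_s)} + L_\Phi\bra*{\abs*{\alpha_\rho-\alpha_\sigma}+\abs*{\beta_\rho-\beta_\sigma}}\abs*{V_s(\sigma_s)}.
\end{equation*}
Integrating against $\eta\dx\lambda$, and using the symmetry of $\eta$ together with the $(x,y)$-symmetry of the moduli $|V_s(\rho_s)-V_s(\sigma_s)|$ and $|V_s(\sigma_s)|$ (itself a consequence of $V_s(\cdot)\in\cV^{\mathrm{as}}(G)$), the pairs $|\rho_s|\otimes\mu+\mu\otimes|\rho_s|$ and $|\rho_s-\sigma_s|\otimes\mu+\mu\otimes|\rho_s-\sigma_s|$ each collapse into a single integral against $\mu$ in one variable---precisely the form to which \eqref{ass:Lipschitz:v} and \eqref{ass:compressibility:V} apply. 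Combined with $|\rho_s|[\Rd]=M$, this yields the required bound $\iint_G\eta\dx{\abs*{\Delta F_s}} \le c\,L_\Phi(ML_V+C_V)\norm*{\rho_s-\sigma_s}_{\TV}$ for a universal constant $c$.

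Plugging this back, using the identity $\norm*{\cdot}_{\TV}=2\sup_A\abs*{\cdot[A]}$ for the zero-mean difference $S_T^V(\rho)(t)-S_T^V(\sigma)(t)$ (which is zero-mean by mass preservation in Proposition~\ref{prop:well-posed}), and taking the supremum in $t$, gives the desired $\dT(S_T^V(\rho),S_T^V(\sigma))\le\alpha T\,\dT(\rho,\sigma)$. For $T<1/\alpha$ this is a strict contraction; Banach's theorem then supplies a unique fixed point of $S_T^V$, which, by the flux-integrability and absolute-continuity statements of Proposition~\ref{prop:well-posed}, automatically lies in $\ACt$ and is the unique measure solution to \eqref{eq:ncl-eq-strong} on $[0,T]$ with $\rho_0=\rho^0$. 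The main technical obstacle is the bookkeeping of the flux difference in the second step: the choice of a common reference $\lambda$ (justified by one-homogeneity), the telescoped Lipschitz estimate, and---most delicately---the symmetry argument that fuses $|\rho|\otimes\mu$ and $\mu\otimes|\rho|$ into a single integral so that the one-sided sup-in-$x$ hypotheses \eqref{ass:compressibility:V} and \eqref{ass:Lipschitz:v} absorb both. A minor aside is that the complete subspace on which the contraction is run should be specified by mass (which is preserved) rather than by total variation, since preservation of the latter would require positivity, not guaranteed for a general admissible $\Phi$.
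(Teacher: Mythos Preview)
Your proposal is correct and follows essentially the same route as the paper's proof: both telescope the flux difference, apply the two Lipschitz bounds \eqref{eq:Phi-cond-1}--\eqref{eq:Phi-cond-2} on $\Phi$ together with \eqref{ass:Lipschitz:v} and \eqref{ass:compressibility:V}, exploit the symmetry of $\eta$ and antisymmetry of $V$ to collapse the two product measures into one, and finish with Banach's fixed-point theorem. The only cosmetic difference is the choice of intermediate point in the telescope---you go through $\Phi(\alpha_\rho,\beta_\rho;V_s(\sigma_s))$ whereas the paper goes through $\Phi(\alpha_\sigma,\beta_\sigma;V_s(\rho_s))$---which is immaterial since the roles of $\rho$ and $\sigma$ are symmetric; your side remark on running the contraction on a fixed-\emph{mass} (rather than fixed-\emph{total-variation}) subspace is a valid observation about the setup that the paper does not address.
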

\begin{proof}
  Let $\rho,\sigma \in \ACt^M$ and let $t \in [0,T]$. We rewrite, for $s\in[0,T]$,
  \begin{equation}\label{eq:diff-flux}
    \dgrad \cdot F^\Phi[\mu;\rho_s,V_s(\rho_s)][A] - \dgrad \cdot F^\Phi[\mu;\sigma_s,V_s(\sigma_s)][A]= \I_s + \II_s ,
  \end{equation}
 where
  \begin{align*}
      \I_s &= \frac{1}{2}\iint_{G}\dgrad\chi_A(x,y) \Biggl[ \Phi\bra*{\pderiv{(\sigma_s\otimes \mu)}{\lambda},\pderiv{(\mu\otimes\sigma_s)}{\lambda}; V_s[\sigma_s]} \\
    &\qquad\qquad\qquad - \Phi\bra*{\pderiv{(\sigma_s\otimes \mu)}{\lambda},\pderiv{(\mu\otimes\sigma_s)}{\lambda}; V_s[\rho_s]}\Biggr] \eta \dx \lambda , \\
    \II_s &= \frac{1}{2}\iint_{G}\dgrad\chi_A(x,y) \Biggl[
    \Phi\bra*{\pderiv{(\sigma_s\otimes \mu)}{\lambda},\pderiv{(\mu\otimes\sigma_s)}{\lambda}; V_s[\rho_s]}\\
    &\qquad\qquad\qquad  -\Phi\bra*{\pderiv{(\rho_s\otimes \mu)}{\lambda},\pderiv{(\mu\otimes\rho_s)}{\lambda}; V_s[\rho_s]}\Biggr] \eta \dx \lambda .
  \end{align*}
  
  For the fist term, we apply the Lipschitz assumptions~\eqref{eq:Phi-cond-1} on $\Phi$ and~\eqref{ass:Lipschitz:v} on $V$, and use the antisymmetry of $V_t(\rho_t)$ and $V_t(\sigma_t)$ and the symmetry of $\eta$ (cf.~\ref{hyp:eta}) to obtain
  \begin{align*}
      \int_0^t \abs{\I_s}\dx{s} &\leq \frac{L_\Phi}{2}\int_0^t\iint_{G} \left|V_s[\sigma_s] - V_t[\rho_s] \right| \eta  \left( \dx(\abs*{\sigma_s}\otimes \mu) + \dx(\mu\otimes\abs*{\sigma_s}) \right)\dx s \\
      &\leq L_\Phi \int_0^t \iint_{G} \abs[\big]{V_s[\sigma_s] - V_t[\rho_s] }(x,y) \eta(x,y)  \dx(\abs*{\sigma_s}\otimes \mu)(x,y)\di s \\
      &\leq\! L_\Phi \!\sup_{s\in [0,t]} \abs{\sigma_s}[\R^d]\! \int_0^t \!\sup_{x\in \R^d} \int_{\R^d\setminus\set{x}} \abs[\big]{V_s[\sigma_s] \!-\! V_t[\rho_s]}(x,y) \eta(x,y)  \!\dx\mu(y) \!\dx s\\
      &\leq L_\Phi L_V M T \, \dT(\rho,\sigma) . 
  \end{align*}

As for $\II_s$, we use the Lipschitz assumption~\eqref{eq:Phi-cond-2} on $\Phi$, again the antisymmetry of $V_t(\rho_t)$ and the symmetry of $\eta$ (recall~\ref{hyp:eta}), and apply the compressibility of $V$ given in~\eqref{ass:compressibility:V} to get
\begin{align*}
    \int_0^t\abs{\II_s}\di s&\le \frac{L_\Phi}{2}\int_0^t\iint_{G}  \abs*{V_s[\rho_s]}(x,y) \left( \left|\pderiv{(\sigma_s\otimes \mu)}{\lambda}(x,y) - \pderiv{(\rho_s\otimes \mu)}{\lambda}(x,y)\right| \right.\\
    &\qquad\qquad + \left. \left| \pderiv{(\mu\otimes\sigma_s)}{\lambda}(x,y) - \pderiv{(\mu\otimes\rho_s)}{\lambda}(x,y)\right| \right) \eta(x,y) \dx \lambda(x,y)\dx s\\
    &\leq \frac{L_\Phi}{2}\int_0^t\iint_{G}\abs*{V_s[\rho_s]}\eta \bra*{\di(\abs{\sigma_s-\rho_s}\otimes \mu)+\di(\mu\otimes\abs{\sigma_s-\rho_s})}\di s\\
    &\leq  L_\Phi C_VT \dT(\rho,\sigma). 
\end{align*}
All in all, taking the suprema over Borel sets and over time in \eqref{eq:diff-flux} gives
  \begin{align*}
    \dT\bra*{S_T^V(\rho), S_T^V(\sigma)}
    \leq  L_{\Phi}\bra*{M L_V + C_V}T \dT(\rho,\sigma)=: \alpha T\dT(\rho,\sigma). 
  \end{align*}
  
  The existence and uniqueness when $T<1/\alpha$ is a direct consequence of the Banach fixed-point theorem in the metric space $\mathcal{AC}_{T}^M$ applied to $S_{T}^V$.
\end{proof}

\begin{rem}
For \eqref{eq:nce-expr}, one has to control only the term $\II_s$, and so the condition in \eqref{ass:compressibility:V} is enough to get the contraction estimate and well-posedness.
\end{rem}

\begin{theorem}[Well-posedness for \eqref{eq:ncl-eq-strong}]\label{thm:existence-strong}
  Let $V\colon [0,T] \times \cMtv^M(\Rd) \to \cV^{\mathrm{as}}(G)$ and suppose there are constants $C_V,L_V>0$ so that, for all $t\in[0,T]$ and all $\rho,\sigma \in \cMtv^M(\Rd)$,
  \begin{equation*}
    \begin{gathered}
      \sup_{t\in[0,T]} \sup_{\rho\in\cMtv^M(\Rd)} \sup_{x\in \Rd}  \int_{\Rd\setminus\{x\}} |V_t[\rho](x,y)| \eta(x,y) \di \mu(y) \leq C_V,\\ 
      \sup_{x\in \Rd}\int_{\Rd\setminus\{x\}} |V_t[\rho](x,y) - V_t[\sigma](x,y)| \eta(x,y) \di\mu(y) \leq L_V \norm{\rho-\sigma}_{TV}.
    \end{gathered}
  \end{equation*}
Then, there exists a unique measure solution $\rho$ to \eqref{eq:ncl-eq-strong} such that $\rho_0 = \rho^0$.
\end{theorem}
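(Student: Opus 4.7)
The plan is to iterate the local well-posedness of Lemma~\ref{lem:S-Lip} over finitely many subintervals covering $[0,T]$. The key observation that makes this work is that the contraction constant $\alpha = L_\Phi(ML_V + C_V)$ depends only on the structural constants $L_\Phi, L_V, C_V$ and on the total variation $M$---not on the starting time nor on the particular initial datum---so by preserving $M$ along the flow, the same $\tau < 1/\alpha$ can be reused at every iteration.

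Concretely, I would fix $\tau \in (0, 1/\alpha)$ and set $t_k := \min(k\tau, T)$ for $k = 0, 1, \ldots, N$, with $N = \lceil T/\tau\rceil$. On $[0,\tau]$, Lemma~\ref{lem:S-Lip} yields a unique measure solution $\rho^{(1)}$ with $\rho^{(1)}_0 = \rho^0$. By the mass/total-variation preservation of Proposition~\ref{prop:well-posed}, the endpoint $\rho^{(1)}_\tau$ again belongs to $\cMtv^M(\Rd)$, which entitles me to reapply the lemma on $[\tau,2\tau]$ with initial datum $\rho^{(1)}_\tau$, obtaining $\rho^{(2)}$; iterating $N$ times reaches $T$. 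The glued curve $\rho_t := \rho^{(k)}_t$ for $t\in[t_{k-1},t_k]$ is well-defined and continuous at each joint by construction, inherits absolute continuity on $[0,T]$ by summing the $L^1$-moduli of its pieces (so $\rho\in\ACt^M$), and satisfies \eqref{eq:ncl-strong-sol} by telescoping the identity
\[
\rho_t[A] - \rho^0[A] = \sum_{j=1}^{k-1}\bra*{\rho_{t_j}[A]-\rho_{t_{j-1}}[A]} + \bra*{\rho_t[A]-\rho_{t_{k-1}}[A]}
\]
over the partition and applying the local integral identity on each subinterval.

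Uniqueness propagates along the same partition: any two solutions $\rho,\tilde\rho\in\ACt^M$ with $\rho_0 = \tilde\rho_0 = \rho^0$ must coincide on $[0,\tau]$ by the uniqueness clause of Lemma~\ref{lem:S-Lip}, hence also on $[\tau,2\tau]$ starting from the common endpoint, and inductively on every $[t_{k-1},t_k]$. The main---and essentially only---obstacle is a bookkeeping one: ensuring that at each step the current solution genuinely lies in $\cMtv^M(\Rd)$ so that the same $\tau$ remains admissible throughout, which would otherwise force a shrinking step size and prevent reaching $T$ in finitely many iterations. This is precisely what Proposition~\ref{prop:well-posed} provides, via mass preservation together with the uniform a priori total-variation bound, so no blow-up of the iteration count occurs and the global solution is constructed in $N = \lceil T/\tau\rceil$ steps.
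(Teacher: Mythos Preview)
Your proposal is correct and follows essentially the same route as the paper: iterate Lemma~\ref{lem:S-Lip} over subintervals of uniform length $\tau<1/\alpha$ (the paper uses $\tau=1/(2\alpha)$), exploiting that $\alpha=L_\Phi(ML_V+C_V)$ is independent of the initial datum, then glue and propagate uniqueness interval by interval.

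One small correction regarding what you flag as the ``main obstacle'': the fact that the endpoint $\rho^{(1)}_\tau$ lies in $\cMtv^M(\Rd)$ is already delivered by Lemma~\ref{lem:S-Lip}, whose fixed point is obtained in the complete metric space $\ACt^M$ itself. Proposition~\ref{prop:well-posed} gives preservation of the \emph{signed} mass $\rho_t[\Rd]$ and an a~priori \emph{bound} on $|\rho_t|[\Rd]$, but not equality $|\rho_t|[\Rd]=M$; so it does not, on its own, pin the endpoint to $\cMtv^M$. The paper simply records $\rho^1\in\mathcal{AC}_{0,\tau}$ without invoking Proposition~\ref{prop:well-posed} at this step.
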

\begin{proof}
  Let $\alpha$ be as in Lemma \ref{lem:S-Lip} and let $a = \alpha T$. If $a< 1$, then the result is direct by applying the well-posedness from Lemma \ref{lem:S-Lip}.
  
  Suppose now $a\geq 1$, write $k$ the integer part of $a$ and let $\tau = 1/(2\alpha)$. Then, by Lemma~\ref{lem:S-Lip}, we know there exists a unique measure solution to \eqref{eq:ncl-eq-strong} on $[0,\tau]$; let us call this solution $\rho^1$ and observe that $\rho^1 \in \mathcal{AC}_{0,\tau}$, where $\mathcal{AC}_{0,\tau}=\AC([0,\tau];\cMtv^M(\Rd))$. Again, applying Lemma~\ref{lem:S-Lip} yields the existence and uniqueness of $\rho^2 \in \mathcal{AC}_{\tau,2\tau}$, the solution to \eqref{eq:ncl-eq-strong} on $[\tau,2\tau]$. By proceeding iteratively, we construct a sequence of solutions
  \begin{equation*}
    \rho^i \in \mathcal{AC}_{(i-1)\tau,i\tau} \quad \text{for all $i \in \{1,\dots,k\}$}, \qquad \rho^{k+1} \in \mathcal{AC}_{k\tau,T}.
  \end{equation*}
  We now define the curve $\rho \in \mathcal{AC}_{0,T} = \ACt$ by
  \begin{equation*}
    \begin{cases} \rho_t = \rho_t^i & \text{for all $t \in [(i-1)\tau,i\tau)$ and $i\in\{1,\dots,k\}$},\\ \rho_t = \rho_t^{k+1} & \text{for all $t \in [k\tau,T]$}, \end{cases}
  \end{equation*}
  which, by construction, is the unique measure solution to \eqref{eq:ncl-eq-strong}.
\end{proof}
We now apply Theorem \ref{thm:existence-strong} to the nonlocal interaction equation studied in \cite{EsPaScSl-arma}, i.e., to the velocity field $v$ as in Example \ref{ex:nlnlie}, but for a more general admissible flux interpolation~$\Phi$. This provides existence and uniqueness of measure solutions to \eqref{eq:nlnl-intro}.
\begin{corollary}[Well-posedness for \eqref{eq:nlnl-intro}]\label{cor:existence-strong-interaction}
  Assume that $\eta$ satisfies
  \begin{equation}\label{eq:cond-f-mu-eta}
    \sup_{x \in \Rd} \int_{\Rd} f(x,y) \eta(x,y) \di\mu(y) < \infty
  \end{equation}
  for some nonnegative measurable function $f \colon \Rd \times \Rd \to \R$.  Let $K \colon \Rd \times \Rd \to \R$ and $P \colon \Rd \to \R$ be such that there exist constants $L_K, L_P>0$ for which
  \begin{equation}\label{eq:K-strong}
    |K(y,z) - K(x,z)| \leq L_K f(x,y), \quad |P(y) - P(x)| \leq L_P f(x,y),
  \end{equation}
for all $x,y,z \in \Rd$. Then, \eqref{eq:nlnl-intro}, whose velocity $V\colon [0,T]\times \cMtv^M(\Rd) \to \cV^{\mathrm{as}}(G)$ we recall is defined for $t\in[0,T]$ and $\sigma \in \cMtv^M(\Rd)$ by
  \begin{equation}
  \label{eq:VKP}
    V_t[\sigma](x,y) = -\dgrad K*\sigma(x,y) - \dgrad P(x,y) \quad \text{for all $(x,y) \in G$},
  \end{equation}
  has a unique measure solution $\rho$ such that $\rho_0 = \rho^0$.
\end{corollary}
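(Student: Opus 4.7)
The plan is to reduce Corollary~\ref{cor:existence-strong-interaction} directly to Theorem~\ref{thm:existence-strong}, so all the work lies in verifying the two hypotheses of that theorem for the specific velocity field $V$ defined in \eqref{eq:VKP}. Antisymmetry of $V_t[\sigma]$ on $G$ is immediate, since $\dgrad\phi(x,y)=\phi(y)-\phi(x)$ is antisymmetric in $(x,y)$ for every $\phi$. The key finite quantity doing the heavy lifting is
\[
C_f := \sup_{x\in\Rd}\int_{\Rd} f(x,y)\eta(x,y) \di\mu(y),
\]
which is finite by assumption~\eqref{eq:cond-f-mu-eta}.

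To verify uniform compressibility, I would expand
\[
V_t[\sigma](x,y) = -\int_{\Rd}\bra[\big]{K(y,z)-K(x,z)} \di\sigma(z) - \bra[\big]{P(y)-P(x)},
\]
apply the pointwise estimates~\eqref{eq:K-strong} both inside the integral and to the $P$-term, and use $|\sigma|(\Rd)=M$ to obtain the pointwise bound $\abs{V_t[\sigma](x,y)} \le (L_K M + L_P) f(x,y)$. Multiplying by $\eta(x,y)$, integrating against $\mu$ in $y$, and taking the suprema in $x$, $t$ and $\sigma$ yields the compressibility hypothesis with $C_V = (L_K M + L_P) C_f$.

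For the Lipschitz estimate on $\rho\mapsto V_t[\rho]$, the deterministic $P$-contribution cancels, leaving
\[
V_t[\rho](x,y) - V_t[\sigma](x,y) = -\int_{\Rd}\bra[\big]{K(y,z)-K(x,z)} \di(\rho-\sigma)(z),
\]
whose absolute value is controlled by $L_K f(x,y) \norm{\rho-\sigma}_{\TV}$ using \eqref{eq:K-strong} together with the definition of the total-variation norm on $\cMtv^M(\Rd)$. Multiplying by $\eta$, integrating in $y$, and taking the supremum in $x$ produces the Lipschitz hypothesis of Theorem~\ref{thm:existence-strong} with constant $L_V = L_K C_f$.

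With both hypotheses established, a direct application of Theorem~\ref{thm:existence-strong} gives the unique measure solution with $\rho_0 = \rho^0$. I do not expect any real obstacle: the corollary is essentially tailored so that \eqref{eq:cond-f-mu-eta} together with the pointwise modulus-of-continuity bounds~\eqref{eq:K-strong} convert directly into the two $\eta$-weighted integral estimates on $V$ required by Theorem~\ref{thm:existence-strong}, with the mass constraint $|\sigma|(\Rd)=M$ entering only through the compressibility constant.
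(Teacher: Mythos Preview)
Your proposal is correct and follows essentially the same route as the paper: verify the compressibility bound via $|V_t[\sigma](x,y)|\le (L_K M+L_P)f(x,y)$ and the Lipschitz bound via $|V_t[\rho]-V_t[\sigma]|\le L_K f(x,y)\norm{\rho-\sigma}_{\TV}$, then invoke Theorem~\ref{thm:existence-strong}. The only cosmetic addition is that you make the constants $C_V=(L_K M+L_P)C_f$ and $L_V=L_K C_f$ explicit and note the antisymmetry of $V_t[\sigma]$, which the paper leaves implicit.
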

\begin{proof}
  We first check that, indeed, $V$ as given in \eqref{eq:VKP} satisfies \eqref{ass:compressibility:V}. 
  \begin{align*}
    |V_t[\rho](x,y)| &= |\dgrad (K*\rho+P)(x,y)| = |K*\rho(y) + P(y) - K*\rho(x) - P(x)|\\
                     &\leq \int_\Rd \left| K(y,z) - K(x,z) \right|  \di|\rho|(z) + |P(y) - P(x)|\\
                     &\leq L_K \int_\Rd f(x,y) \di|\rho|(z) + L_P f(x,y) = (ML_K + L_P) f(x,y);
  \end{align*}
  hence we obtain
  \begin{align*}
    \MoveEqLeft{\sup_{t\in [0,T]}\sup_{\rho\in\cMtv^M(\Rd)}\sup_{x\in\Rd} \int_{\Rd\setminus\{x\}} |V_t[\rho](x,y)| \eta(x,y) \di\mu(y)}\\
    &\phantom{=}\leq (ML_K + L_P) \sup_{x \in \Rd} \int_{\Rd\setminus\{x\}} f(x,y) \eta(x,y) \di\mu(y) <\infty,
  \end{align*}
  which is \eqref{ass:compressibility:V}. Then, we are only left with showing \eqref{ass:Lipschitz:v}. For all $\rho,\sigma \in \cMtv^M(\Rd)$, $t \in [0,T]$ and $(x,y) \in G$, we have
  \begin{align*}
    |V_t[\rho](x,y)-V_t[\sigma](x,y)| &= | \dgrad (K*\rho_t - K*\sigma_t)(x,y) |\\
    &\leq \int_\Rd |K(y,z) - K(x,z)| \di |\rho_t(z) - \sigma_t(z)|\\
    &\leq L_K \norm{\rho_t-\sigma_t}_{TV} f(x,y),
  \end{align*}
  which yields \eqref{ass:Lipschitz:v} and ends the proof.
\end{proof}
Note that choosing the function $f$ in the above corollary to be
  $$
  f(x,y) = |x-y|\vee|x-y|^2 \quad \mbox{for all $x,y \in \Rd$}
  $$
  shows that \cite[Assumption \textbf{(K3)}]{EsPaScSl-arma}, needed for the existence result on weak solutions to \eqref{eq:nlnl-intro} in~\cite[Theorem 3.15]{EsPaScSl-arma}, is stronger than that in \eqref{eq:K-strong} on $K$. On the other hand, the condition \eqref{eq:cond-f-mu-eta}, resulting from this choice of $f$, is a stronger assumption on $\eta$ than \cite[Assumption \textbf{(A1)}]{EsPaScSl-arma}, again needed in Theorem \cite[Theorem 3.15]{EsPaScSl-arma}. Our well-posedness result in Corollary \ref{cor:existence-strong-interaction} thus holds for more general interaction potentials but less general weight functions than our weak existence result in Theorem \cite[Theorem 3.15]{EsPaScSl-arma}. Another interesting example of $f$ which can be chosen in Corollary \ref{cor:existence-strong-interaction} is a constant function, which only imposes $K$ to be a bounded function; in this case, the resulting condition \eqref{eq:cond-f-mu-eta} on $\eta$ is even more restrictive, albeit still reasonable.

\begin{rem}[The case when \texorpdfstring{$\mu$}{the base measure} is atomic]
Let $I \subseteq \N$ be not necessarily finite. Consider $\{x_i\}_{i\in I} \subset \R^d$, $\{m_i\}_{i\in I} \subset [0,\infty)$ and $\mu \in \cM^+(\Rd)$ such that
\begin{equation*}
  \mu = \sum_{i\in I} m_i\delta_{x_i}.
\end{equation*}
Let $V\colon [0,T] \times \cMtv^M(\Rd) \to \cV^{\mathrm{as}}(G)$ satisfy the hypotheses of Theorem \ref{thm:existence-strong}, that is, there exist $C_V,L_V>0$ such that, for all $t\in[0,T]$ and all $\rho,\sigma \in \cMtv^+(\Rd)$, we have
\begin{gather*}
  \sup_{t\in[0,T]}\sup_{\rho\in\cMtv^M(\Rd)}\sup_{x \in \Rd} \sum_{\substack{j\in I\\ x_j\neq x}}^n m_j |V_t[\rho](x,x_j)|\eta(x,x_j) \leq C_V,\\
  \sup_{x \in \Rd} \sum_{j\in I :x_k\neq x}^n m_j |V_t[\rho](x,x_j) - V_t[\sigma](x,x_j)|\eta(x,x_j) \leq L_V \norm{\rho-\sigma}_{TV}.
\end{gather*}

In this case, we know from Theorem \ref{thm:existence-strong} that a unique solution $\rho$ exists on $[0,T]$ such that $\rho_0 = \rho^0$. If $\supp\rho^0\subseteq\supp\mu$, then Proposition \ref{prop:well-posed} entails that the solution stays supported in $\supp\mu$, in particular, $\rho_t\ll\mu$ for a.e. $t\in[0,T]$. If moreover $\Phi$ is jointly antisymmetric, i.e., $\Phi(a,b;-v)=-\Phi(b,a;v)$ for any $a,b,v\in\R$, then \eqref{eq:ncl-strong-sol} rewrites, for any $A\in\cB(\Rd)$ and a.e. $t\in[0,T]$, as
\begin{align*}
  \rho_t[A] = \rho^0[A] -\sum_{i\neq j}\int_0^t \Phi\bra*{r_i(t)m_j, m_i r_j(t), V_s[\rho_s](x_i,x_j)}\eta(x_i,x_j)\di s.
\end{align*}
\end{rem}

\section{\texorpdfstring{$L^p$}{Lp} solutions and positivity preservation}\label{sec:lp}

Let $\rho^0 \in \cMtv^M(\R^d)$ be such that $\rho^0\ll \mu$. In this section, we consider curves in $\AC([0,T];L^1_\mu(\Rd))$ and equip it with the distance
\begin{equation*}
    \|\rho^1-\rho^2\|_{L^\infty([0,T];L^1_\mu(\Rd))}=\sup_{t\in[0,T]}\int_\Rd|\rho^1(x)-\rho^2(x)|\dx\mu(x) \quad \text{for all $\rho^1,\rho^2\in L^1_\mu(\Rd)$}.
\end{equation*}
The advantage of the $L^1_\mu$ setting is that we are able to show positivity preservation of solutions when $\Phi=\Phi_{\mathrm{Upwind}}$, as well as $L^p_\mu$ regularity with $p\in(1,\infty)$.

In this setting, we choose $\lambda=\mu\otimes\mu$ so that the admissible flux from Definition \ref{def:AdmFlux} is given by
  \begin{equation*}
    \dx F^\Phi[\mu;\rho,w](x,y) = \Phi\bra*{\rho(x) ,\rho(y); w(x,y)} \dx (\mu\otimes\mu)(x,y),
  \end{equation*} 
for any $\rho\in L^1_\mu(\Rd)$, $w\in\cV^{\mathrm{as}}(G)$ and $(x,y)\in G$. Assuming that $\Phi$ is jointly antisymmetric, i.e., $\Phi(a,b;-v)=-\Phi(b,a;v)$ for any $a,b,v\in\R$, the nonlocal divergence of $F^\Phi[\mu;\rho,v]$ is given by
\begin{equation*}
    \dgrad\cdot F[\mu;\rho,v](x)=\int_{\Rd\setminus\{x\}}\Phi\bra*{\rho(x) ,\rho(y); v(x,y)}\eta(x,y)\dx\mu(y) \quad \text{for $\mu$-a.e. $x\in\Rd$};
\end{equation*}
properties stated in Proposition \ref{prop:well-posed} still hold. As in Section~\ref{sec:nonl-nonl-inter}, the velocity field may depend on the configuration itself:
\[
v_t(x,y)=V_t[\rho_t](x,y) \quad \text{for all $t\in[0,T]$ and $(x,y)\in G$},
\]
for some $V:[0,T]\times L^1_\mu(\Rd)\to\cV^{\mathrm{as}}(G)$.
The solution map is, for $\mu$-a.e. $x\in\Rd$, given by
\begin{equation}\label{eq:solution-map-l1}
\rho_t(x)=\rho^0(x)-\int_0^t \dgrad\cdot F[\mu;\rho_s,V_s[\rho_s]](x)\dx s.   
\end{equation}

Fix $\rho^0\in L^1_{\mu,M}(\Rd)$. The procedure followed in Section \ref{sec:nonl-nonl-inter} provides a well-posedness result, where, for $M>0$ fixed, we set $L^1_{\mu,M}(\Rd):=\set[\big]{\rho\in L^1_{\mu}(\Rd) : \int_\Rd|\rho(x)|\dx\mu(x)=M}$:
\begin{theorem}[Well-posedness for \eqref{eq:ncl-eq-strong}]\label{thm:existence-strong-l1}
  Let $V\colon [0,T] \times L^1_{\mu,M}(\Rd) \to \cV^{\mathrm{as}}(G)$ and suppose there are constants $C_V,L_V>0$ so that, for all $t\in[0,T]$ and all $\rho,\sigma \in L^1_{\mu,M}(\Rd)$,
  \begin{equation*}
    \begin{gathered}
      \sup_{t\in[0,T]} \sup_{\rho\in L^1_{\mu,M}(\Rd)} \sup_{x\in \Rd}  \int_{\Rd\setminus\{x\}} |V_t[\rho](x,y)| \eta(x,y) \di \mu(y) \leq C_V,\\ 
      \sup_{x\in \Rd}\int_{\Rd\setminus\{x\}} |V_t[\rho](x,y) - V_t[\sigma](x,y)| \eta(x,y) \di\mu(y) \leq L_V \norm{\rho-\sigma}_{L^1_\mu(\Rd)}.
    \end{gathered}
  \end{equation*}
Then, there exists a unique measure solution $\rho$ to \eqref{eq:ncl-eq-strong} satisfying \eqref{eq:solution-map-l1} such that $\rho_0 = \rho^0$.
\end{theorem}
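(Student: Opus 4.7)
The plan is to transpose the fixed-point argument of Lemma~\ref{lem:S-Lip} and Theorem~\ref{thm:existence-strong} from the total-variation setting to the $L^1_\mu$-setting, exploiting the fact that the choice $\lambda = \mu\otimes\mu$ makes the flux $F^\Phi[\mu;\rho,w]$ absolutely continuous with respect to $\mu\otimes\mu$ and its divergence absolutely continuous with respect to $\mu$. Specifically, I would work on the complete metric space $\AC([0,T]; L^1_{\mu,M}(\Rd))$ endowed with the natural distance $d_T^1(\rho,\sigma) := \sup_{t\in[0,T]}\|\rho_t-\sigma_t\|_{L^1_\mu(\Rd)}$, and define the solution map
\[
  \widetilde{S}_T^V(\rho)_t(x) := \rho^0(x) - \int_0^t \dgrad\cdot F^\Phi[\mu;\rho_s,V_s[\rho_s]](x)\dx s \qquad \text{for $\mu$-a.e.~$x\in\Rd$}.
\]

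The first step is to verify that $\widetilde{S}_T^V$ maps the ambient space into itself. That the output is $L^1_\mu$-valued and absolutely continuous in time follows from the compressibility bound on $V$ together with an estimate analogous to that of Proposition~\ref{prop:well-posed}, which places $\dgrad\cdot F^\Phi[\mu;\rho_s,V_s[\rho_s]]$ in $L^1_\mu(\Rd)$; a Gronwall argument then uniformly controls $\|\widetilde{S}_T^V(\rho)_t\|_{L^1_\mu}$ on $[0,T]$. The constraint $\|\widetilde{S}_T^V(\rho)_t\|_{L^1_\mu}=M$ is automatic for nonnegative data thanks to $\dgrad\chi_{\Rd}=0$; for signed data one can equivalently carry the fixed point out on a closed ball in $L^1_\mu(\Rd)$ of radius $M e^{L_\Phi C_V T}$, which leaves all subsequent estimates unchanged.

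The heart of the argument is the contraction estimate. Following the decomposition of Lemma~\ref{lem:S-Lip}, I would split
\[
  \dgrad\cdot F^\Phi[\mu;\rho_s,V_s[\rho_s]](x) - \dgrad\cdot F^\Phi[\mu;\sigma_s,V_s[\sigma_s]](x) = \I_s(x) + \II_s(x),
\]
with $\I_s$ isolating $V_s[\rho_s]-V_s[\sigma_s]$ (at fixed density $\sigma_s$) and $\II_s$ isolating $\rho_s-\sigma_s$ (at fixed velocity $V_s[\rho_s]$). Applying \eqref{eq:Phi-cond-1} and the Lipschitz hypothesis on $V$ to $\I_s$, and \eqref{eq:Phi-cond-2} together with the compressibility of $V$ to $\II_s$, then integrating $|\I_s|+|\II_s|$ against $\dx\mu(x)\dx s$ on $[0,t]$ and using a Fubini swap together with the symmetry of $\eta$ and the antisymmetry of $V_s[\rho_s]$ to reduce the integrands in $y$ to those in $x$, one obtains
\[
  \|\widetilde{S}_T^V(\rho)_t - \widetilde{S}_T^V(\sigma)_t\|_{L^1_\mu(\Rd)} \leq 2 L_\Phi (M L_V + C_V)\, T\, d_T^1(\rho,\sigma).
\]
For $T$ small enough this is a strict contraction; the Banach fixed-point theorem yields existence and uniqueness on a short interval, and the gluing argument from Theorem~\ref{thm:existence-strong} extends it to $[0,T]$.

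The main technical difficulty I anticipate is the correct pointwise-in-$x$ interpretation of the Radon--Nikodym derivatives in the flux after setting $\lambda=\mu\otimes\mu$: this choice collapses them to the actual densities $\rho_s(x),\rho_s(y)$, but justifying measurability and the Fubini manipulations in the signed regime needs some care, especially since one then must replace the $L^1_{\mu,M}$-sphere by a ball so as not to rely on a positivity preservation that, in the generality of an admissible $\Phi$, is not yet established at this stage.
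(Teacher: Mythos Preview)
Your proposal is correct and follows precisely the route the paper intends: the paper does not spell out a proof of Theorem~\ref{thm:existence-strong-l1} at all, but simply states that ``the procedure followed in Section~\ref{sec:nonl-nonl-inter} provides a well-posedness result,'' i.e., one repeats Lemma~\ref{lem:S-Lip} and Theorem~\ref{thm:existence-strong} with $\lambda=\mu\otimes\mu$ and $\norm{\cdot}_{\TV}$ replaced by $\norm{\cdot}_{L^1_\mu}$. Your observation about the sphere-versus-ball issue for signed data is a genuine technical point that the paper leaves implicit in both the $\cMtv^M$ and $L^1_{\mu,M}$ settings; your proposed fix (run the contraction on a closed ball of radius $Me^{L_\Phi C_V T}$) is the standard remedy and does not affect the constants in the contraction estimate.
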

As we now work with densities (with respect to $\mu$), we are able to prove positivity preservation for \eqref{eq:cont-eq-strong} in the case of the upwind flux interpolation; the proof of the result follows the strategy used in \cite{BoyerUpwindFiniteVolume}.
\begin{proposition}[Positivity preservation for \eqref{eq:cont-eq-strong}]
Let $\rho^0$ be nonnegative everywhere and let the assumptions in Theorem \ref{thm:existence-strong-l1} hold. Furthermore, assume that $\Phi\equiv\Phi_{\mathrm{Upwind}}$. Then, the solution $\rho$ to \eqref{eq:cont-eq-strong} is nonnegative a.e., that is, $\rho_t(x)\ge0$ for a.e. $t\in[0,T]$ and $\mu$-a.e. $x\in\Rd$.
\end{proposition}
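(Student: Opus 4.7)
The strategy is to run a Gronwall argument on the $L^1_\mu$-norm of the negative part $\rho_t^-(x) := \max\{-\rho_t(x), 0\}$, using the specific structure of $\Phi_{\mathrm{Upwind}}$. Since $\rho_0 \geq 0$ a.e., we have $\|\rho_0^-\|_{L^1_\mu} = 0$; if we can show that
\[
  \|\rho_t^-\|_{L^1_\mu(\Rd)} \leq C\int_0^t \|\rho_s^-\|_{L^1_\mu(\Rd)} \dx s
\]
for some $C>0$ (depending on $C_V$), then Gronwall immediately yields $\rho_t^- = 0$ $\mu$-a.e., for a.e. $t\in[0,T]$, as required.

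\textbf{Main computation.} Formally, testing \eqref{eq:solution-map-l1} against $-\chi_{\{\rho_t<0\}}$ and using $\Phi_{\mathrm{Upwind}}(a,b;w)=aw_+-bw_-$, I would obtain
\[
  \frac{\di}{\di t}\int_{\Rd}\rho_t^-\dx\mu = \iint_{G} \chi_{\{\rho_t(x)<0\}}\bra*{\rho_t(x) v_t(x,y)_+ - \rho_t(y) v_t(x,y)_-}\eta(x,y)\dx(\mu\otimes\mu).
\]
The first term equals $-\iint_G \rho_t^-(x)\,v_t(x,y)_+\,\eta\dx(\mu\otimes\mu)\leq 0$, so can be discarded. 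For the second term, I split $\rho_t(y) = \rho_t^+(y) - \rho_t^-(y)$; the $\rho_t^+$ contribution is nonpositive (it enters with a minus sign and all other factors are nonnegative), and the remaining $\rho_t^-$ contribution is estimated via Fubini, the antisymmetry $v_t(x,y)_-=v_t(y,x)_+$, the symmetry of $\eta$, and the uniform bound \eqref{ass:compressibility:V}:
\[
  \iint_{G}\chi_{\{\rho_t(x)<0\}}\rho_t^-(y) v_t(x,y)_- \eta(x,y)\dx(\mu\otimes\mu)\leq C_V\int_{\Rd}\rho_t^-\dx\mu.
\]
Putting these together and integrating in time gives the Gronwall-ready inequality.

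\textbf{Rigorous justification.} The chain rule for $t\mapsto \int_{\Rd} \rho_t^-\dx\mu$ is the only delicate point: $x\mapsto x_-$ is only Lipschitz, and the integrated solution map \eqref{eq:solution-map-l1} is not classically differentiable in $t$. I would handle this in the usual Stampacchia/Boyer manner by introducing a smooth convex approximation $\beta_\varepsilon\in C^1(\R)$ of $x\mapsto x_-$ (for instance $\beta_\varepsilon(x) = \sqrt{x_-^2+\varepsilon^2}-\varepsilon$, so that $\beta_\varepsilon\geq0$, $\beta_\varepsilon(0)=0$, $\beta_\varepsilon'$ is bounded and converges pointwise to $-\chi_{\{x<0\}}$ as $\varepsilon\downarrow0$). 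Using that $\rho\in\AC([0,T];L^1_\mu)$ with integrable time-derivative given by $-\dgrad\cdot F[\mu;\rho,V(\rho)]$, the chain rule applies to $t\mapsto\int_{\Rd}\beta_\varepsilon(\rho_t)\dx\mu$, yielding the above identity with $\beta_\varepsilon'(\rho_t(x))$ in place of $-\chi_{\{\rho_t(x)<0\}}$. Dominated convergence (using the uniform bound on $\beta_\varepsilon'$ and the $\mu\otimes\mu$-integrability guaranteed by $|\rho_t|\in L^1_\mu$ and \eqref{ass:compressibility:V}) then passes to the limit $\varepsilon\downarrow0$ in each term.

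\textbf{Main obstacle.} The hard part is really only the chain rule step: ensuring that the expressions we differentiate and the integrands we pass to the limit in are dominated by integrable functions in $(t,x,y)$, uniformly in $\varepsilon$. Once this is set up, the sign bookkeeping provided by the upwind structure (negative outgoing flux at sites where $\rho_t(x)<0$, and $\rho_t^+$ contributions having the right sign) is essentially automatic, and the Gronwall conclusion is immediate. This argument is specific to $\Phi_{\mathrm{Upwind}}$: for instance, the arithmetic-mean interpolation would produce a term $(\rho_t(x)+\rho_t(y))v_t(x,y)$ in which the sign of $\rho_t(y)$ at sites $y$ with $\rho_t(y)\geq 0$ cannot be controlled, explaining why positivity preservation is not expected for general admissible $\Phi$.
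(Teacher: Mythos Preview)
Your proof is correct and follows a genuinely different route from the paper. You run a Stampacchia-type integral estimate: control of $t\mapsto \int_{\Rd}\rho_t^-\,\dx\mu$ via Gronwall, using the sign structure of the upwind flux to discard the two favourable terms and bound the remaining $\rho_t^-(y)$ contribution by $C_V\int_{\Rd}\rho_t^-\,\dx\mu$. The paper instead follows a pointwise maximum-principle strategy in the spirit of Boyer's finite-volume work: it introduces an integrating factor $\tilde\rho_t = A(t)\rho_t$ with $A(t)=\exp\bra*{-\int_0^t \sup_x\int v_s(x,\cdot)_-\eta\,\dx\mu\,\dx s}$, rewrites the equation for $\tilde\rho$, and then argues by contradiction that any supersolution of the transformed equation stays nonnegative, using minimising sequences for the spatial infimum and the time continuity of $\tilde\rho$ in $L^1_\mu$ to locate the first time at which the infimum dips below zero. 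Your approach is shorter and avoids the pointwise-infimum machinery, which is somewhat delicate in the $L^1_\mu$ setting (infima need not be attained, and one works along subsequences); conversely, the paper's maximum-principle viewpoint connects to the classical upwind-scheme literature and would transfer more naturally to $L^\infty$ or pointwise frameworks where the $L^1$ Gronwall is not available.
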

\begin{proof}
As $\rho$ is absolutely continuous in time, for a.e. $t\in[0,T]$ and $\mu$-a.e. $x\in\Rd$, it holds
\begin{equation*}
\begin{split}
    \partial_t\rho_t(x)&=-\dgrad\cdot F^\Phi[\mu;\rho_t,v_t](x)\\
    &=-\int_{\Rd\setminus\{x\}}v_t(x,y)_+ \eta(x,y)\rho_t(x)\dx\mu(y)\\
    &\quad+\int_{\Rd\setminus\{x\}}v_t(x,y)_-\eta(x,y)\rho_t(y)\dx\mu(y).
\end{split}
\end{equation*}
We denote by $a,A\:[0,T]\to\R$ the maps defined by
\begin{align*}
	a(t):=\sup_{x\in\Rd}\int_{\Rd\setminus\{x\}}|v_t(x,y)_-|\eta(x,y)\dx\mu(y), \quad  
	A(t):=\exp\left(-\int_0^t a(s) \dx s\right),
\end{align*}
and we set $\tilde{\rho}_t(x)=A(t)\rho_t(x)$ for a.e. $t\in[0,T]$ and $\mu$-a.e. $\in\Rd$. In turn, by using $v_+=v+v_-$, we obtain, for $\mu$-a.e. $x\in\Rd$,
\begin{equation*}
    \begin{split}
        \partial_t\tilde{\rho}_t(x)&=A'(t)\rho_t(x)+A(t)\partial_t\rho_t(x)\\
        &=-A(t)a(t)\rho_t(x)-A(t)\dgrad\cdot F^\Phi[\mu;\rho_t,v_t](x)\\
        &=-a(t) \tilde{\rho}_t(x)-A(t)\int_{\Rd\setminus\{x\}}v_t(x,y)_+\eta(x,y)\rho_t(x)\dx\mu(y)\\
        &\quad+A(t)\int_{\Rd\setminus\{x\}}v_t(x,y)_-\eta(x,y)\rho_t(y)\dx\mu(y)\\
        &=- a(t) \tilde{\rho}_t(x)-\int_{\Rd\setminus\{x\}}v_t(x,y)\eta(x,y)\tilde{\rho}_t(x)\dx\mu(y)\\
        &\quad-\int_{\Rd\setminus\{x\}}v_t(x,y)_-\eta(x,y)\tilde{\rho}_t(x)\dx\mu(y)\\
        &\quad+\int_{\Rd\setminus\{x\}}v_t(x,y)_-\eta(x,y)\tilde{\rho}_t(y)\dx\mu(y);
    \end{split}
\end{equation*}
reordering the terms, we get
\begin{equation}\label{eq:auxiliary}
	\begin{split}
		\partial_t \tilde \rho_t(x) &+ \int_{\Rd\setminus\{x\}} v_t(x,y)_- \bra*{\tilde \rho_t(x) - \tilde \rho_t(y)} \eta(x,y) \dx\mu(y) \\
		&+\tilde{\rho}_t(x)\left(a(t)+\int_{\Rd\setminus\{x\}} v_t(x,y)\eta(x,y)\dx\mu(y)\right) = 0,
	\end{split}
\end{equation}
noting that, by definition of $a$, we have
\[
a(t)+\int_{\Rd\setminus\{x\}} v(x,y)\eta(x,y)\dx\mu(y)\ge0.
\]

Let us prove that any supersolution of \eqref{eq:auxiliary} is a.e. nonnegative. Indeed, if this were true, then we would have that the supersolution $\rho_t^\varepsilon:=\tilde{\rho}_t+\varepsilon t=a(t)\rho_t+\varepsilon t\ge0$ $\mu$-a.e., for any $\varepsilon>0$ and a.e. $t\in[0,T]$; and, letting $\varepsilon\to0$, we then would obtain $\rho_t\ge0$ for a.e. $t\in[0,T]$. By contradiction, we thus assume that a supersolution to \eqref{eq:auxiliary}, still denoted by $\tilde{\rho}$, is such that there exists $\tau\in(0,T]$ with
\begin{equation}
    \label{eq:tau-super-sol}
    \inf_{y\in\R^d}\tilde\rho_\tau(y) <0.
\end{equation}
Let $(\tau_k)_k\subset(0,T]$ be defined as $\tau_k = \tau + 1/k$ for all $k>0$ large enough. By the time continuity of $\tilde \rho$ from $[0,T]$ to $L_\mu^1(\Rd)$, we know that, up to a subsequence, $\tilde\rho_{\tau_k} \to \tilde\rho_\tau$ pointwise as $k\to\infty$. Furthermore, let $(x_n^t)_n$ be a minimizing sequence for $\tilde \rho_t$ for all $t\in[0,T]$. Then,
\begin{equation*}
    \tilde \rho_{\tau_k}(x_n^\tau) \xrightarrow[k\to\infty]{} \tilde \rho_\tau(x_n^\tau) \xrightarrow[n\to\infty]{} \inf_{y\in\R^d}\tilde\rho_\tau(y),
\end{equation*}
and similarly, whenever $\tau>0$, for the sequence $(\tau_k')_k\subset(0,T]$ defined by $\tau_k' = \tau - 1/k$ for all $k>0$ large enough. Hence the set $\Delta\subset(0,\infty)$, given by
\begin{equation*}
    \Delta = \left\{ \delta>0 : \forall\, t\in[0,T]\cap(\tau-\delta,\tau+\delta),\; \inf_{y\in\R^d}\tilde\rho_t(y) <0\right\},
\end{equation*}
is nonempty and $\delta_*:=\sup \Delta>0$. Moreover, $\delta_*\le\tau$ since, by assumption, $\tilde\rho_0\geq0$. Setting $\tau_* := \tau - \delta_*\geq0$ and $\tau^* := \min\{T,\tau + \delta_*\}$, we have 
\begin{equation*}
    \inf_{y\in\R^d} \tilde\rho_{\tau_*}(y) \geq 0 \qquad \text{and} \quad \inf_{y\in\R^d} \tilde\rho_t(y) < 0 \quad\text{for all $t\in(\tau_*,\tau^*)$}.
\end{equation*}
For all $h>0$ such that $\tau_*+h < \tau^*$, we have
\begin{equation*}
    \lim_{n\to\infty}\tilde \rho_{\tau_* +h}(x_n^{\tau_*+h}) < 0 \leq \lim_{n\to\infty} \tilde \rho_{\tau_*}(x_n^{\tau_*}) \leq \liminf_{n\to\infty} \tilde \rho_{\tau_*}(x_n^{\tau_*+h}),
\end{equation*}
since $x_n^{\tau_*+h}$ is minimising for $\tilde{\rho}_{\tau_*+h}$ but not necessarily for $\tilde{\rho}_{\tau_*}$, and so 
\begin{equation}
    \label{eq:deriv-super-sol}
    \limsup_{n\to\infty} \left( \tilde \rho_{\tau_* +h}(x_n^{\tau_*+h}) - \tilde \rho_{\tau_*}(x_n^{\tau_*+h}) \right) \leq 0. 
\end{equation}
We find that, for $t_*=\tau_*+h$,
\begin{gather*}
    \limsup_{n\to\infty} \int_{\tau_*}^{\tau_*+h}\int_{\Rd\setminus\{x_n^{t_*}\}} v_{t_*}(x_n^{t_*},y)_- \bra*{\tilde \rho_{t_*}(x_n^{t_*}) - \tilde \rho_{t_*}(y)} \eta(x_n^{t_*},y) \dx\mu(y)\dx t \leq 0,\\
    \limsup_{n\to\infty} \int_{\tau_*}^{\tau_*+h}\!\tilde{\rho}_{t_*}(x_n^{t_*})\left(a(t_*)+\int v_{t_*}(x_n^{t_*},y)\eta(x_n^{t_*},y)\dx\mu(y)\right) \leq 0.
\end{gather*}
Integrating \eqref{eq:auxiliary} between $(\tau_*,\tau_*+h)$ and taking the $\liminf$ as $n\to\infty$, we arrive at
\[
\liminf_{n\to\infty}\left(\tilde{\rho}_{\tau_*+h}(x_n^{\tau_*+h})- \tilde{\rho}_{\tau_*}(x_n^{\tau_*+h})\right)\ge 0,
\]
which contradicts \eqref{eq:deriv-super-sol}. Hence the existence of $\tau$ such that \eqref{eq:tau-super-sol} holds is false and every supersolution to \eqref{eq:auxiliary} must be a.e. nonnegative, which concludes the proof.
\end{proof}

We are also able to prove $L^p$ regularity of solutions for \eqref{eq:cont-eq-strong}:

\begin{proposition}[$L^p$ regularity for \eqref{eq:cont-eq-strong}]
Suppose that $\pderiv{\mu}{x}\in L^\infty(\R^d)$ and $\rho_0$ is nonnegative everywhere with $\rho_0\in L^p(\Rd)$ for some $p\in (1,\infty)$. 
Consider any measurable pair $(\rho,v):[0,T]\to L^1_{\mu,M}(\Rd)\times \cV^{\mathrm{as}}(G)$ satisfying \eqref{eq:solution-map-l1}, with $\Phi\equiv\Phi_{\mathrm{Upwind}}$. Assume that $\eta$ is homogeneous in space, that is, 
\begin{equation}\label{eq:eta:homogeneous}
\eta(x,y)=\eta(x-y),  \text{ for any } (x,y)\in G.
\end{equation}
Assume there exists a constant $C_v>0$ such that $v:[0,T]\to \cV^{\mathrm{as}}(G)$ satisfies the following uniform translational bound:
\begin{equation}\label{eq:ass:vt:BepsStability}
		\limsup_{\eps\to 0}  \int_0^T \sup_{y\in\Rd}\int_{\Rd} \sup_{h,w\in B_{\eps}(0)} \bra*{ (v_t(x+h,y+w))_- \eta(x,y)}^p \di y \leq C_v.
\end{equation}
Let $\rho$ be the solution to \eqref{eq:nce-expr}. Then, $\rho_t$ is a density with respect to the Lebesgue measure and $\rho_t\in L^1_{\mu,M}(\Rd)\cap L^p(\Rd)$ for all $t\in[0,T]$. Furthermore, for all $t\in [0,T]$, it holds
\begin{equation}\label{eq:est:Lp}
	\sup_{t\in [0,T]} \norm{\rho_t}_{L^p(\Rd)}^p \leq \bra*{ \|\rho_0\|_{L^p(\Rd)}^p+\tilde{C}_v T}\exp\bra*{\frac{T}{q}},
\end{equation}
with $\tilde{C}_v=\frac{C_v}{p}\bra*{p M\norm*{\pderiv{\mu}{x}}_{L^\infty}}^p$.
\end{proposition}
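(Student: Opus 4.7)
The plan is to establish the differential inequality
\begin{equation*}
    \frac{d}{dt}\,\|\rho_t\|_{L^p(\Rd)}^p \;\leq\; \frac{1}{q}\,\|\rho_t\|_{L^p(\Rd)}^p \,+\, \tilde{C}_v,
\end{equation*}
from which~\eqref{eq:est:Lp} follows by the standard Gronwall estimate for a linear first-order ODE (note that $\bra*{y(0)+\tilde{C}_v T}e^{T/q}$ is an upper envelope for the solution of $y'\leq y/q + \tilde{C}_v$). Thanks to the positivity preservation just proved, $\rho_t\geq 0$ a.e., so $\rho_t^{p-1}$ is well-defined and the chain rule $\frac{d}{dt}\int\rho_t^p\di x = p\int \rho_t^{p-1}\partial_t\rho_t\di x$ is formally meaningful. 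Inserting the upwind equation and splitting on $v_\pm$, I obtain
\begin{equation*}
    \frac{d}{dt}\int \rho_t^p\di x = -p\iint \rho_t(x)^p v_t(x,y)_+\eta(x,y)\di\mu(y)\di x + p\iint \rho_t(x)^{p-1}\rho_t(y) v_t(x,y)_-\eta(x,y)\di\mu(y)\di x.
\end{equation*}
The first term is nonpositive (all factors are nonnegative) and is discarded. For the second term, abbreviating $I(x):=\int \rho_t(y)v_t(x,y)_-\eta(x,y)\di\mu(y)$, Young's inequality with parameter $\lambda=p^{-1/q}$ applied to $(\rho_t(x)^{p-1},I(x))$ is crafted to produce precisely the coefficient $1/q$ in front of $\int\rho_t^p\di x$:
\begin{equation*}
    p\int \rho_t(x)^{p-1} I(x)\di x \;\leq\; \frac{1}{q}\int \rho_t^p\di x \,+\, p^{p-1}\int I(x)^p \di x.
\end{equation*}

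To control the residual $\int I^p\di x$, I would invoke Jensen's inequality on the probability measure $M^{-1}\rho_t\di\mu$ to obtain $I(x)^p\leq M^{p-1}\int (v_t(x,y)_-\eta(x,y))^p\rho_t(y)\di\mu(y)$. A Fubini-type swap then moves the outer $\di x$-integral inside, where $\di\mu = \phi_\mu\di y$ with $\phi_\mu:=\pderiv{\mu}{x}\in L^\infty(\Rd)$, and the translation-stability bound~\eqref{eq:ass:vt:BepsStability} in the limit $\eps\to 0$ is exactly what bounds the resulting $L^p$-norm of $v_-\eta$. Combined with the mass constraint $\int \rho_t\di\mu = M$, these ingredients are designed so that $p^{p-1}\int I^p\di x \leq \tilde{C}_v$ with the stated $\tilde{C}_v = \frac{C_v}{p}\bra*{pM\|\phi_\mu\|_{L^\infty}}^p$; Gronwall's inequality then delivers~\eqref{eq:est:Lp}.

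The rigorous justification requires regularizing $\rho_t$, since $\partial_t\rho_t$ is only defined $\mu$-a.e.\ and we have no a priori smoothness. I would set $\rho_t^\eps := \omega_\eps * (\rho_t\mu)$ (a smooth, nonnegative Lebesgue function because $\phi_\mu\in L^\infty$ lets one mollify $\rho_t\mu$), derive the equation it satisfies using the spatial homogeneity of $\eta$ from~\eqref{eq:eta:homogeneous} (which yields the same structural form but with velocities evaluated at the translated points $v_t(x+h,y+w)$ for $h,w\in B_\eps(0)$), perform the formal $L^p$-estimate above at the $\rho_t^\eps$-level with constants uniform in $\eps$, and finally pass to the limit $\eps\to 0$ using the lower semicontinuity of the $L^p$-norm. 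The main obstacle is this mollification step: the nonlocal divergence does not commute with Lebesgue convolution, and the commutator terms that appear are precisely what the somewhat intricate assumption~\eqref{eq:ass:vt:BepsStability} is built to dominate uniformly in $\eps$, in contrast to the weaker compressibility bound~\eqref{ass:compressibility:V} that was sufficient for well-posedness but not for the $L^p$-regularity claim.
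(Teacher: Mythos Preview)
Your proposal is correct and follows essentially the same strategy as the paper: mollify $\rho_t$ against a standard kernel, differentiate the $L^p$-norm of the mollified density, exploit nonnegativity to discard a sign-favourable contribution, apply a Young-type splitting to produce the $\frac{1}{q}\|\rho_t^\eps\|_{L^p}^p$ term plus a remainder controlled by the translational bound~\eqref{eq:ass:vt:BepsStability}, conclude by Gronwall, and pass to the limit $\eps\to 0$ via weak lower semicontinuity.

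Two small differences are worth noting. First, the paper does not drop the $v_+$-term directly as you do; instead, after mollifying it rewrites the dissipation via the integration-by-parts identity for $\dgrad\cdot$, uses the antisymmetry of $v$ and the spatial homogeneity~\eqref{eq:eta:homogeneous} to reduce everything to a single $v_-$-integral over translated arguments, and only then drops one half of $\dgrad(\rho_t^\eps)^{p-1}$ by sign. This is the step where the factor $\norm{\pderiv{\mu}{x}}_{L^\infty}$ enters (to pass from $\dx\mu^z(h)$ to $\dx h$), which is why it shows up in $\tilde C_v$ but is invisible in your purely formal computation. Second, where you invoke Jensen on the probability $M^{-1}\rho_t\dx\mu$ to bound $I(x)^p$, the paper uses H\"older in $h$ followed by a crude $\sup_w$ bound on $\overline v_t^\eps\eta$; both routes land on the same Gronwall input. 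Your identification of the role of~\eqref{eq:eta:homogeneous} and of why~\eqref{eq:ass:vt:BepsStability} (rather than the weaker~\eqref{ass:compressibility:V}) is needed is accurate.
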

\begin{proof}
Let $\nu$ be a standard mollifier, i.e., a nonnegative and even function in $C^\infty_c(\Rd)$ (the set of smooth, compactly supported functions defined on $\Rd$) such that $\int_\Rd\nu\dx x=1$ and $\supp\nu= B_1(0):=\{x\in\Rd : \norm{x}=1\}$. Fix $\varepsilon>0$ and write $\nu_\varepsilon = \varepsilon^{-d} \nu(\cdot/\varepsilon)$. Also, for any $z\in \R^d$, define the translation operator $\tau^z \colon\R^d\to \R^d$ by $\tau^{z}(h) := h - z$. In particular, set the translated measures $\rho_t^{z}:=\tau^{z}_\#\rho_t$ and $\mu^z:=\tau^z_\#\mu$, where ${}_\#$ stands for the measure-theoretic pushforward. We use the following interplay between translation and convolution: for any $f\in C_b(\Rd)$ (the set of continuous and bounded functions defined on $\Rd$), we have $f*\nu_\varepsilon\in C_b^\infty(\Rd)$, i.e., $f*\nu_\varepsilon\in C_b(\Rd)$ and $f*\nu_\varepsilon$ is smooth, and
\begin{align*}
    \iint_{\Rd\times\Rd} f(h)\nu_\varepsilon(z)\dx\rho_t^z(h)\dx z&=\iint_{\Rd\times\Rd} f(h-z)\nu_\varepsilon(z)\dx\rho_t(h)\dx z\\
    &=\int_\Rd (\nu_\varepsilon*f)(h)\dx\rho_t(h)\\
    &=\iint_{\Rd\times\Rd} \nu_\varepsilon(h-z)f(z)\dx z\dx\rho_t(h)\\
    &=\iint_{\Rd\times\Rd} f(z)\nu_\varepsilon(z-h)\dx\rho_t(h)\dx z\\
    &=\int_{\Rd} f(z)\rho_t^\varepsilon(z)\dx z.
\end{align*}
In particular, for $f\equiv (\rho_t^\eps)^{p-1}$ with $p\geq 1$ and all $t\in[0,T]$, we obtain
\begin{align*}
  \iint_{\Rd\times\Rd} \rho_t^{\eps}(h)^{p-1} \nu_\varepsilon(z)\dx\rho_t^z(h)\dx z&= 
  \int_\Rd \rho_t^\eps(z)^p \dx{z}  = \norm{\rho_t^\eps}_{L^p(\R^d)}^p. 
\end{align*}

Let $\rho^\varepsilon=\rho_t*\nu_\varepsilon$ be the smoothed solution satisfying
\[
\partial_t\rho^\varepsilon_t+\bra[\big]{\dgrad\cdot F^\Phi}*\nu_\varepsilon=0,
\]
where $((\dgrad\cdot F^\Phi)*\nu_\varepsilon)(x)=\int_\Rd\nu_\varepsilon(x-z)\di\dgrad\cdot F^\Phi(z)$ for all $x\in\Rd$. We can compute the time derivative of the $L^p$ norm of $\rho^\varepsilon$: for a.e. $t\in[0,T]$, use \eqref{eq:eta:homogeneous} to get
\begin{align*}
    \MoveEqLeft\frac{\di}{\di{t}}\int_\Rd|\rho^\varepsilon_t|^p\di x = p \int_\Rd\rho^\varepsilon_t(x)^{p-1}\partial_t\rho^\varepsilon_t(x)\di x\\
    &=-p\int_\Rd \rho^\varepsilon_t(x)^{p-1}\bra[\big]{\nu_\varepsilon*\dgrad\cdot F^\Phi}(x)\di x\\
    &=\frac{p}{2}\iint_G\dgrad(\rho_t^\varepsilon)^{p-1}*\nu_\varepsilon\Phi\bra*{\rho_t(x),\rho_t(y); v_t }\eta \dx \mu(x)\dx \mu(y)\\
    &=\frac{p}{2}\int_\Rd \iint_G\dgrad(\rho_t^\varepsilon)^{p-1}(x-z,y-z)\nu_\varepsilon(z) v_t(x,y)_+\eta(x,y)\dx\rho_t(x) \dx\mu(y)\dx z\\
    &\quad - \frac{p}{2}\int_\Rd \iint_G\dgrad(\rho_t^\varepsilon)^{p-1}(x-z,y-z)\nu_\varepsilon(z) v_t(x,y)_-\eta(x,y)\dx\mu(x) \dx \rho_t(y)\dx z\\
    &= - p\int_\Rd \iint_G\dgrad(\rho_t^\varepsilon)^{p-1}(x-z,y-z)\nu_\varepsilon(z) v_t(x,y)_-\eta(x,y)\dx \mu(x) \dx \rho_t(y)\dx z\\
    &= - p\int_\Rd \iint_G\!\dgrad(\rho_t^\varepsilon)^{p-1}(h,w)\nu_\varepsilon(z) v_t(z\!+\!h,z\!+\!w)_-\eta(h,w)\!\dx \mu^z(h)\! \dx \rho_t^z(w)\!\dx z\\
    &\le p\int_\Rd \iint_G\!(\rho_t^\varepsilon)^{p-1}(h)\nu_\varepsilon(z) v_t(z\!+\!h,z\!+\!w)_-\eta(h,w)\!\dx \mu^z(h)\! \dx \rho_t^z(w)\!\dx z=:I.
\end{align*}
To estimate $I$, we use the following variant of Young's inequality: for $p\in (1,\infty)$ and $a,b\in (0,\infty)$, there holds 
\begin{equation}\label{eq:Young}
	a^{p-1} b \leq \frac{a^p}{q} +\frac{b^p}{p}, \quad\text{where } q= \frac{p}{p-1}. 
\end{equation}
Due to \eqref{eq:ass:vt:BepsStability}, for some $\eps_0 >0$ sufficiently small, for all $\eps \in (0,\eps_0)$ and a.e. $t\in[0,T]$, the function $\overline v_t^\eps\colon G \to \R$, defined as
\[
  \overline v_t^\eps(x,y) := \sup_{h,w\in B_{\eps}(0)} \left(v_t(x+h,y+w)\right)_-,
\]
satisfies, for some $C_v^{\eps_0}>0$, the bound
\begin{equation*}
 \sup_{\eps \in (0,\eps_0)}\int_0^T \sup_{x\in\Rd}\int_{\Rd\setminus\{x\}} \bra*{\overline v_t^\eps(x,y) \eta(x,y)}^p \di y \leq C_v^{\eps_0}.
\end{equation*}
Using the bound above, Hölder's inequality and~\eqref{eq:Young}, we get, for a.e. $t\in[0,T]$,
\begin{align*}
I&\leq p \norm*{\pderiv{\mu}{x}}_{L^\infty} \int_\Rd (\rho_t^\varepsilon)^{p-1}(h) \int_{\Rd\setminus \{h\}} \int_\Rd \nu_\varepsilon(z) \overline v_t^\eps(h,w) \eta(h,w)\dx\rho_t^z(w) \dx z \dx h \\
&\leq p \norm*{\pderiv{\mu}{x}}_{L^\infty} \left[ \bra*{\int_\Rd \abs*{\rho_t^\eps(h)}^p \dx{h}}^{\frac{p-1}{p}}\times\right.\\
&\hspace{3.0cm}\left.\times\bra*{\int_\Rd \abs*{\int_{\Rd\setminus\{h\}} \int_\Rd \nu_\varepsilon(z) \overline v_t^\eps(h,w) \eta(h,w)\dx\rho_t^z(w) \dx z}^p \dx h }^\frac{1}{p}\right]\\
&\leq p \norm*{\pderiv{\mu}{x}}_{L^\infty}  \norm{\rho_t^\eps}_{L^p}^{p-1} \bra*{ \int_\Rd \abs*{ \sup_{w\in \R^d} \overline v_t^\eps(h,w) \eta(h,w) \int_{\Rd\setminus\{h\}} \int_\Rd \nu_\eps(z) \dx\rho_t^z(w) \dx z}^p\!\!\! \dx h}^{\frac{1}{p}} \\
&\le\frac{1}{q}\norm{\rho_t^\eps}_{L^p}^{p}+\frac{1}{p}\bra*{ p \norm*{\pderiv{\mu}{x}}_{L^\infty}
\rho_0[\Rd]}^p\sup_{w\in \R^d} \int_\Rd   \abs*{\overline v_t^\eps(h,w) \eta(h,w)}^p \dx h.
\end{align*}
In turn, we infer
\[
\sup_{t\in [0,T]} \norm{\rho_t^\varepsilon}_{L^p}^p \leq \bra*{ \|\rho_0\|_{L^p(\Rd)}^p+\tilde{C}_vT}\exp\bra*{\frac{T}{q}},
\]
where $\tilde{C}_v=\frac{C_v}{p}\bra*{p \norm*{\pderiv{\mu}{x}}_{L^\infty}\rho_0[\Rd]}^p$. The above inequality ends the proof since, up to a subsequence, we deduce $\rho_t^\varepsilon\rightharpoonup\rho_t$ in $L^p(\Rd)$ for any $t\in[0,T]$, and the stability estimate~\eqref{eq:est:Lp} follows from the arbitrariness of $\eps_0$.
\end{proof}

\subsection*{Acknowledgements}
The authors are deeply grateful to Prof. Dejan Slep\v{c}ev (Carnegie Mellon University) for many enlightening discussions on the contents of the manuscript. AE was supported by the Advanced Grant Nonlocal-CPD (Nonlocal PDEs for Complex Particle Dynamics: Phase Transitions, Patterns and Synchronization) of the European Research Council Executive Agency (ERC) under the European Union’s Horizon 2020 research and innovation programme (grant agreement No. 883363). A considerable part of this work was carried out while AE was a postdoc at FAU Erlangen-N\"{u}rnberg. AE gratefully acknowledge support by the German Science Foundation (DFG) through CRC TR 154  ``Mathematical Modelling, Simulation and Optimization Using the Example of Gas Networks". 
AS is supported by the Deutsche Forschungsgemeinschaft (DFG, German Research Foundation) under Germany's Excellence Strategy EXC 2044 -- 390685587, \emph{Mathematics M\"unster: Dynamics--Geometry--Structure}.

\bibliography{references}
\bibliographystyle{abbrv}

\end{document}